\newcommand{\N}{{\cal N}}
\newcommand{\M}{{\cal M}}
\newcommand{\D}{{\cal D}}
\newtheorem{thm}{Theorem}
\newtheorem{lem}[thm]{Lemma}
\newcommand\rev[1]{\textcolor{black}{#1}}
\newcommand{\bs}[1]{\boldsymbol{#1}}
\newcommand{\bmu}{\bs{\mu}}
\begin{document}

\title{{Reduced-basis method for the iterative solution of parametrized symmetric positive-definite linear systems}}

\author{Ngoc-Cuong Nguyen\footnote{MIT Department of Aeronautics and Astronautics, 77 Massachusetts Ave., Cambridge, MA 02139, USA. Email: \texttt{cuongng@mit.edu}.  This author was partially supported by the Air Force Office of Scientific Research (FA9550-16-1-0214 and  FA9550-15-1-0276), the National Aeronautics and Space Administration (NASA NNX16AP15A) and Pratt \& Whitney.}, \, Yanlai Chen\footnote{Department of Mathematics, University of Massachusetts Dartmouth, 285 Old Westport Road, North Dartmouth, MA 02747, USA. Email: {\tt{yanlai.chen@umassd.edu}}. This author was partially supported by National Science Foundation grant DMS-1719698. \newline ${   } \quad \ {  }$ We would like to thank Professor Jaime Peraire at MIT and Mr. Pablo Fernandez at MIT for helpful discussion.}}

\date{\empty}

\maketitle

\begin{abstract}
We present a class of reduced basis (RB) methods for the iterative solution of parametrized symmetric positive-definite (SPD) linear systems. The essential ingredients are a Galerkin projection of the underlying parametrized system onto a reduced basis space to obtain a reduced system; an adaptive greedy algorithm to efficiently determine sampling parameters and associated basis vectors; an offline-online computational procedure \rev{and a multi-fidelity approach} to decouple the construction and application phases of the reduced basis method; and solution procedures to employ the reduced basis approximation as a {\em stand-alone iterative solver} or as a {\em preconditioner} in the conjugate gradient method. We present numerical examples to demonstrate the performance of the proposed methods in comparison with multigrid methods. Numerical results show that, \rev{when applied to solve linear systems resulting from discretizing the Poisson's equations, the speed of convergence of our methods matches or surpasses that of the multigrid-preconditioned conjugate gradient method}, while their computational cost per iteration is significantly smaller \rev{providing a feasible alternative when the multigrid approach is out of reach due to timing or memory constraints for large systems}. 
\rev{Moreover, numerical results verify that this new class of reduced basis methods, when applied as a stand-alone solver or as a preconditioner, is capable of achieving the accuracy at the level of the {\em truth approximation} which is far beyond the RB level.}
\end{abstract}

\newpage

\section{Introduction}

In this paper, we present a class of subspace iterative methods for solving  parametrized SPD linear systems of the form: 
\begin{equation}
\label{eq1}
 A_{\N}( \mu)  x_\N(\mu) =  f_\N( \mu) ,
\end{equation}
where $A_\N( \bmu)$ is an SPD parameter-dependent matrix of dimension $\N \times \N$ and $f_\N( \bmu)$ is a parameter-dependent vector of dimension $\N$. Here the parameter vector $ \bmu = (\mu_1, \mu_2, \ldots, \mu_P)$  resides in a parameter space $\D \in \mathbb{R}^P$.  The parametrized linear  system (\ref{eq1}) often arises in the context of parametric analysis,  engineering design and optimization, and statistics. The development of  fast and reliable methods for solving the system (\ref{eq1}) with {\em many queries} of the parameter vector in the parameter space is \rev{of} significant interest due to a wide variety of applications in engineering and science.

Classical iterative methods such as Jacobi, Richardson, and Gauss-Seidel methods have been used to solve \rev{the linear system} \eqref{eq1}. A generalization of Gauss-Seidel method led to the successive over-relaxation (SOR) method devised by Young and Frankel \cite{Young1954}. An alternative to classical iterative methods are Krylov subspace methods. The conjugate gradient (CG) method developed by Hestenes and Stiefel \cite{Hestenes1952} is well suited for solving symmetric positive-definite linear systems. Other Krylov methods for linear systems include CGS \cite{Sonneveld1989}, BiCGSTAB \cite{Vorst2006}, MINRES \cite{Paige1975}, GMRES \cite{sasc86}, and QMR \cite{Freund1991}, to name a few. Adopting ideas from Nesterov methods \cite{Nesterov2004a,Nesterov2012,Nesterov2004} for convex optimization, accelerated residual methods are recently developed in \cite{Nguyen2018} to solve linear and nonlinear systems. Multigrid methods \cite{brandt1977multi,briggs2000multigrid,hackbusch2013multi,Mavriplis1998a} have also been widely used as an iterative solver or as a preconditioner for preconditioned Krylov methods.  For symmetric positive definite systems, fundamental theoretical convergence results are established, and efficient multigrid solvers have been developed.

General-purpose iterative solvers can be computationally prohibitive for solving the system (\ref{eq1}) repeatedly over a large number of parameter samples. As a result, there exist iterative methods that exploit the parameter dependence of the system in some particular ways. Because Krylov subspaces are invariant for shifted matrices,  efficient Krylov methods \cite{Darnell2008,Frommer1998} have been developed to simultaneously solve shifted linear systems. These methods can be easily extended to polynomial dependence on a single parameter by means of  linearization \cite{Grammont2011,Gu2005,simoncini2002numerical}. Exploiting the fact that a sequence of linear systems $A_\N(\bmu_j) x_\N(\bmu_j) = f_\N(\bmu_j), 1 \le j \le J$, have some important similarities, Krylov subspace recycling \cite{kilmer2006recycling,parks2006recycling} has been proposed as a means to speed up Krylov methods for sequence of linear systems. Instead of discarding the Krylov space generated when solving a linear system, one can judiciously select a subspace and use it to reduce the number of iterations for solving the next system. However,  it is not completely clear how subspace selection affects convergence. Low-rank  Krylov subspace methods \cite{Kressner2011} combine the sequence of linear systems, $A_\N(\bmu_j) x_\N(\bmu_j) = f_\N(\bmu_j), 1 \le j \le J$, into one large linear system and exploit the low-rank structure of the resulting linear system to solve it efficiently.

The reduced basis (RB) method \cite{Barrault2004,Boyaval2010,Buffa2012a,ARCME,Chen2010a,Eftang2010a,Eftang2012a,Grepl2007,prudhomme02:_reliab_real_time_solut_param,VeroyPatera05, HesthavenRozzaStammBook, BennerCohenOhlbergerWillcoxBook} has been widely used to enable fast and reliable approximation of the solution of the parametrized linear system (\ref{eq1}) arising from the spatial discretization of a parametrized linear partial differential equation. The RB method exploits the fact that the solution $x_\N(\bmu)$ resides in a {\em low-dimensional manifold} $\M$ shaped by the parameter dependence of the matrix $A_\N(\bmu)$ and the vector $f_\N(\bmu)$ with respect to $\bmu \in \D$. As $\bmu$ varies in $\D$, $x_\N(\bmu)$ also varies in $\M$. Instead of searching $x_\N(\bmu)$ in an $\N$-dimensional space $\mathbb{R}^\N$, the RB method looks for an approximation $\widehat{x}_\N(\bmu)$ in an $N$-dimensional space $W_N$. This RB space $W_N$ is a subspace of the low-dimensional manifold $\M$. The RB method relies on inexpensive \rev{but} rigorous {\em a posteriori} error estimation to certify the error $\|x_\N(\bmu) - \widehat{x}_{\N}(\bmu)\|$ and construct $W_N$ through the greedy sampling procedure. However, in constructing $W_N$, the RB method still relies on standard solution methods to solve the system (\ref{eq1}) at $N$ particular parameter vectors in the sample set $S_N = \{\bmu_1, \bmu_2,\ldots, \bmu_N\}$.

Our goal in this paper is to develop RB methods that iteratively solve the parametrized system (\ref{eq1}) for $x_\N(\bmu)$, \rev{as opposed to being content with an approximation $\widehat{x}_\N(\bmu)$}.  \rev{We point out that this objective is fundamentally different from what is achieved in \cite{ChenGottliebMaday, Elman2015} where preconditioning techniques were developed to drive the reduced solver toward fast convergence to $\widehat{x}_\N(\bmu)$.} We propose two different approaches. In the first approach, we adopt the main ideas of multigrid to devise a reduced basis iteration (RBI) scheme for iteratively solving the system (\ref{eq1}). In the second approach, we use the RBI scheme as a preconditioner for the conjugate gradient (CG) method to accelerate the convergence rate of the CG method.  We present numerical examples to demonstrate the performance of the proposed methods in comparison with multigrid technique. Numerical results show that, \rev{when applied to solve linear systems resulting from discretizing the Poisson's equations, the speed of convergence of our methods matches or surpasses that of the multigrid-preconditioned conjugate gradient method}, while their computational cost per iteration is significantly smaller \rev{providing a feasible alternative when the multigrid approach is out of reach due to timing or memory constraints for large systems}. 
\rev{Moreover, numerical results verify that this new class of reduced basis methods, when applied as a stand-alone solver or as a preconditioner, is capable of achieving the accuracy at the level of the {\em truth approximation} which is far beyond the RB level.} Furthermore, \rev{we propose to incorporate these fast linear solves into the offline procedure of a traditional RBM when $N$ system solves are necessary}. \rev{This adaptive greedy algorithm is capable of alleviating the strenuous offline phase of RBM when the computational cost for snapshot calculation is dominating.}

The paper is organized as follows. In Section 2, we give a brief overview of the reduced basis method. In Section 3, we present the RBI method for solving parametrized SPD linear systems. In Section 4, we introduce the reduced basis conjugate gradient method. In Section 5, we present numerical results to demonstrate their performance. Finally, in Section 6, we end the paper with some concluding remarks.

\section{The reduced basis method}

We briefly review the RB method to compute an approximate solution of the system (\ref{eq1}). For simplicity of exposition, we shall drop the subscript $\N$ in the remainder of this paper.

\subsection{Reduced basis approximation}

We assume that we are given $N$ linearly independent vectors $w_n, 1 \le n \le N,$ of dimension $\N$ to form a RB space $W_{ N} = [w_{1},  \ldots, w_{N}] \in \mathbb{R}^{\N \times N} $. We express the RB approximation as a linear combination of the basis vectors as
\begin{equation}
\label{eq3}
\widehat{x}(\bmu) = W_{ N}  a_N(\bmu) ,
\end{equation}
where $a_N (\bmu)\in \mathbb{R}^N$ is the RB vector. By applying the Galerkin projection of the original system (\ref{eq1}) onto  $W_N$, we find $a_N(\bmu)$ as a solution of the following linear system:
\begin{equation}
\label{eq2}
A_{N}(\bmu) a_N(\bmu) = f_N(\bmu) .
\end{equation}
Here $A_{N}(\bmu)$ is the RB matrix of dimension $N \times N$ and $f_N(\bmu)$ is the RB vector of dimension $N$, which are computed as follows
\begin{equation}
\label{eq2b}
A_{N}(\bmu)  = W_{N}^T A(\bmu) W_{ N}, \qquad  f_N(\bmu) = W_{ N}^T f(\bmu) .
\end{equation}
Note that since $A_{N}(\bmu)$ is symmetric positive-definite, its inverse can be computed by the Cholesky decomposition.

\subsection{Construction of the RB space}

\rev{There are standard greedy algorithms \cite{Rozza08:arcme} that make use of a rigorous (and sometime costly) {\em a posteriori} error estimator, however} we use the greedy algorithm recently proposed in \cite{JiangChenNarayan2018} to construct the RB space $W_N$ as listed in Algorithm \ref{algorithm0}. Instead of  {\em a posteriori} error estimators, this greedy algorithm relies on the $L_1$ norm of the RB vector to choose the next parameter sample from the training set $\Xi_{\rm train}$. It is shown in \cite{JiangChenNarayan2018} that this greedy approach works as effectively as the one using {\rm a posteriori} error estimators \rev{for at least the Poisson-type of systems concerned in this paper. Most importantly, its low cost is appealing in our context as the RB space construction is an overhead expense for the linear system solves}. Note that $W_{ N} = \mbox{MGS}(W_{N-1},  x(\bmu_N))$ denotes the modified Gram-Schmidt orthogonalization  of $x(\bmu_N)$ with respect to the previously selected and orthogonalized basis vectors in $W_{N-1}$.

\begin{algorithm}
\caption{Reduced basis greedy sampling algorithm}
\label{algorithm0}
\vspace{0.5ex}
0. Choose  $\bmu_1$ randomly in $\Xi_{\rm train}$  \\[0.5ex]
1. Initialize $S_{1} = \{\bmu_1\}$ and $W_0 = \emptyset$  \\[0.5ex]
2. \mbox{\textbf{For}} $N = 1,2,\ldots, N_{\max}$  \\[0.5ex]
3. $\quad\ \mbox{Solve }  A(\bmu_N) x(\bmu_N) = f(\bmu_N)$ \\[0.5ex]
4. $\quad\ \mbox{Orthogonalize } W_{ N} = \mbox{MGS}(W_{N-1},  x(\bmu_N))$ \\[0.5ex]
5. $\quad\ \mbox{Solve } A_{N}(\bmu) x_{N} (\bmu) = f_{N} (\bmu) $ for all $\bmu \in \Xi_{\rm train}$ \\[0.5ex]
6. $\quad\ \mbox{Find } \bmu_{N+1} = \arg \max_{\bmu \in \Xi_{\rm train}}  \sum_{n=1}^N |x_{N,n}(\bmu)|$ \\[.5ex]
7. $\quad\ \mbox{Update } S_{N+1} = S_N \cup \bmu_{N+1}$ \\[0.5ex]
8. \mbox{\textbf{End For}}
\vspace{0.5ex}
\end{algorithm}

With the greedy sampling algorithm, we still need to solve the parametrized linear system in Step 3 for $N_{\max}$ times. We shall address how to do these linear solves later. Furthermore, we have to form and solve the RB system in Step 5 for all $\bmu$ in the training set. While solving the RB system is inexpensive, forming it can be much more expensive. However, for a particular set of parametrized linear systems,  the RB system can be formed efficiently through an offline-online procedure as discussed next.

\subsection{Offline-online computational procedure}

For parametrized linear systems in which both $A(\bmu)$ and $f(\bmu)$ are affine in the parameter vector, the corresponding RB matrix $A_N(\bmu)$ and vector $f_N(\bmu)$ can be computed efficiently through an offline-online procedure.  Affine parameter dependence implies that $A(\bmu)$ and $f(\bmu)$  can be expressed as
\begin{equation}
\label{eq12}
A(\bmu) = \sum_{q=1}^{Q} \Theta_q(\bmu) A_q,  \qquad f(\bmu) = \sum_{r=1}^{R} \Phi_r(\bmu) f_r ,
\end{equation}
where $\Theta_q(\bmu), 1 \le q \le Q,$ and $\Phi_r(\bmu), 1 \le r \le R,$ are parameter-dependent functions, while $A_q, 1 \le q \le Q,$ and $f_r, 1\le r \le R,$ are parameter-independent. 

The affine parameter dependence allows us to form $A_N(\bmu)$ and $f_N(\bmu)$ as follows
\begin{equation}
\label{eq12b}
A_{N}(\bmu)  =   \sum_{q=1}^{Q} \Theta_q(\bmu) A_{N, q}, \qquad  f_N(\bmu) = \sum_{r=1}^{R} \Phi_r(\bmu) f_{N,r} ,
\end{equation}
where the following matrices and vectors are pre-computed and stored as
\begin{equation}
\label{eq12c}
A_{N,q} =  W_{N}^T A_q W_{ N},  \qquad  f_{N,r} = W_{ N}^T f_r .
\end{equation}
The computation of $A_{N,q}, 1 \le q \le Q,$ and $f_{N,r}, 1 \le r \le R,$ is performed only once in the offline stage. In the online stage, we form $A_N(\bmu)$ and $f_N(\bmu)$ by (\ref{eq12b}) with $O(QN^2 + RN)$ operations and invert the RB matrix $A_N(\bmu)$ with $O(N^3)$ operations.

In the online stage, we still need to compute the RB approximation $\widehat{x}(\bmu) = W_N a_N(\bmu)$ with $O( \N N)$ operations. Hence, the total operation count of the online stage is $O( \N N + N^3 + Q N^2 + RN)$. Note that $N$, $Q$, and $R$ are typically very small, whereas $\N$ is often very large.  In this case, the computational cost of the RB approximation $\widehat{x}(\bmu)$ is $O( \N N)$ for any $\bmu \in \D$.

\section{The RB iteration method}

In this section, we employ the RB method described earlier to solve the parametrized linear system (\ref{eq1}) for any $\bmu \in \D$. We aim to compute $x(\bmu)$ in an iterative fashion.

\subsection{Main algorithm}

\rev{Inspired by the RB and multigrid methods, we design a scheme which we call Reduced Basis Iteration (RBI) method} for iteratively solving the linear system (\ref{eq1}). 
For any given iterate $x(\bmu)$, we evaluate the residual vector, $r(\bmu) = f(\bmu) - A(\bmu) x(\bmu)$, and  project it onto the RB space to obtain
\begin{equation}
\label{eq5}
r_{N} (\bmu)  = W_{ N}^T r(\bmu).
\end{equation}
We then solve the following system
\begin{equation}
\label{eq6}
A_{N}(\bmu) e_{N} (\bmu) = r_{N} (\bmu) ,
\end{equation}
and update the solution
\begin{equation}
\label{eq7}
x(\bmu) = x(\bmu) +  W_{ N}  e_{N}(\bmu) .
\end{equation}
Finally, we perform the post-smoothing step
\begin{equation}
\label{eq8}
x(\bmu) =  \mbox{S}(A(\bmu), f(\bmu), x(\bmu)) ,
\end{equation}
where $z = \mbox{S}(A,b,y)$ denotes the smoother (Jacobi or Gauss-Seidel) that iterates on $A x  = b$ starting from $y$ and returns $z$. The process is repeated until the maximum number of iterations is reached or the residual norm $\|r(\bmu)\|$ is less than a given tolerance $\epsilon$.  

\begin{algorithm}
\caption{Reduced basis iteration method}
\label{algorithm1}
\vspace{0.5ex}
0. Initialize ${x}(\bmu) = 0$   \\[0.5ex]
1. \mbox{\textbf{For}} $k = 1,2,\ldots, m$  \\[0.5ex]
2. $\quad\ \mbox{Compute the residual } r(\bmu) = f(\bmu) - A(\bmu) x(\bmu) $ \\[0.5ex]
3. $\quad\ \mbox{\textbf{If }} \|r(\bmu)\| < \epsilon$ \textbf{ then} exit loop \\[.5ex]
4. $\quad\ \mbox{Project } r_{N} (\bmu)  = W_{ N}^T r(\bmu)$ \\[0.5ex]
5. $\quad\ \mbox{Solve } A_{N}(\bmu) e_{N} (\bmu) = r_{N} (\bmu) $ \\[0.5ex]
6. $\quad\ \mbox{Update } x(\bmu) = x(\bmu) +  W_{ N}  e_{N}(\bmu) $ \\[.5ex]
7. $\quad\ \mbox{Smooth } x(\bmu) =  \mbox{S}(A(\bmu), f(\bmu), x(\bmu))$ \\[0.5ex]
8. \mbox{\textbf{End For}}
\vspace{0.5ex}
\end{algorithm}

Algorithm \ref{algorithm1} lists the steps of the RBI method. It should be noted that  $x(\bmu) = \widehat{x}(\bmu)$ in Step 6 of Algorithm \ref{algorithm1} for the first iteration. This is true for any initial guess residing in the \rev{column} space of $W_N$. Hence, it is appropriate  to set the initial guess to zero. For later reference, we shall denote by
\begin{equation}
\label{eq8}
y(\bmu) = \mathrm{RBI}(A(\bmu), b(\bmu), W_N, m) ,
\end{equation}
as a procedure that applies the RBI method to the linear system $A(\bmu) x(\bmu) = b(\bmu)$ for $m$ iterations and returns $y(\bmu)$. 

The computational complexity of the RBI method per iteration includes the operation count of a matrix-vector product in Step 2, $O(\N N)$ in both Step 4 and 6, $O(N^{\rev{3}})$ in Step 5, and the operation count of the smoothing in Step 7. Therefore, the computational complexity per iteration is linear in $\N$ if the operation count of both the matrix-vector product and the smoothing is linear in $\N$.

\subsection{Properties}

In what follows, we discuss some basic properties of the RBI method. The first property is the following: 

\begin{lem}
If the solution $x(\bmu)$ of the system (\ref{eq1}) resides in the \rev{column} space of $W_N$, namely, there exists a vector $c(\bmu) = \rev{(c_1(\bmu), c_2(\bmu), \ldots, c_N(\bmu))}\in \mathbb{R}^N$ such that
 \begin{equation}
\label{eq8b}
x(\bmu) = \sum_{j=1}^N c_j(\bmu) w_j  = W_N c(\bmu),
\end{equation}
then $x(\bmu) = \widehat{x}(\bmu)$ and the RBI method converges in one iteration. 
\end{lem}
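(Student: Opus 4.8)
The plan is to show that when the true solution lies in the column space of $W_N$, the Galerkin-projected (reduced) problem reproduces it exactly, and then to trace this through one step of Algorithm~\ref{algorithm1} to conclude convergence. First I would substitute the hypothesis $x(\bmu) = W_N c(\bmu)$ into the original system \eqref{eq1} to get $A(\bmu) W_N c(\bmu) = f(\bmu)$, and then left-multiply by $W_N^T$ to obtain $W_N^T A(\bmu) W_N c(\bmu) = W_N^T f(\bmu)$, i.e.\ $A_N(\bmu) c(\bmu) = f_N(\bmu)$ using the definitions in \eqref{eq2b}. Since $A_N(\bmu)$ is SPD, hence invertible, and $a_N(\bmu)$ is by \eqref{eq2} the unique solution of $A_N(\bmu) a_N(\bmu) = f_N(\bmu)$, it follows that $a_N(\bmu) = c(\bmu)$, and therefore $\widehat{x}(\bmu) = W_N a_N(\bmu) = W_N c(\bmu) = x(\bmu)$. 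This establishes the first assertion.

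Next I would trace one iteration of the RBI method starting from the zero initial guess. In Step~2 the residual is $r(\bmu) = f(\bmu) - A(\bmu)\cdot 0 = f(\bmu)$; in Step~4 its projection is $r_N(\bmu) = W_N^T f(\bmu) = f_N(\bmu)$; in Step~5 we solve $A_N(\bmu) e_N(\bmu) = f_N(\bmu)$, so $e_N(\bmu) = a_N(\bmu) = c(\bmu)$ by the computation above; and in Step~6 the update gives $x(\bmu) = 0 + W_N e_N(\bmu) = W_N c(\bmu)$, which is exactly the true solution. Then the residual is zero, so the smoothing step in Step~7 leaves $x(\bmu)$ unchanged (any reasonable smoother fixes the exact solution), and the exit test in Step~3 of the next iteration terminates the loop. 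Hence the method converges in one iteration.

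I do not anticipate any serious obstacle here: the result is essentially the statement that Galerkin projection is exact on its own range, combined with the observation already made in the text that $x(\bmu) = \widehat{x}(\bmu)$ after Step~6 on the first iteration. The only point requiring a word of care is that the smoother $\mathrm{S}(A,b,y)$ should return $y$ unchanged when $y$ already solves $Ax = b$; this holds for the Jacobi and Gauss--Seidel smoothers mentioned in the paper, since their iteration matrix fixes the exact solution. I would state this as the one minor assumption on $\mathrm{S}$, and otherwise the proof is a direct chain of substitutions.
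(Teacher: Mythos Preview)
Your proof is correct and follows essentially the same approach as the paper: substitute $x(\bmu)=W_Nc(\bmu)$ into \eqref{eq1}, left-multiply by $W_N^T$, and use that $A_N(\bmu)$ is SPD to conclude $c(\bmu)=a_N(\bmu)$, hence $x(\bmu)=\widehat{x}(\bmu)$; then invoke the observation that Step~6 of the first iteration already produces $\widehat{x}(\bmu)$. Your treatment is in fact slightly more careful than the paper's, which does not explicitly address the smoother in Step~7, whereas you correctly note (and justify for Jacobi/Gauss--Seidel) that $\mathrm{S}$ must fix the exact solution for the argument to be complete.
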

\begin{proof}
First, we show that $x(\bmu) = \widehat{x}(\bmu)$. By the assumption that the solution of the system (\ref{eq1}) resides in the RB space, we have 
\begin{equation}
A(\bmu) W_N c(\bmu) = f(\bmu) 
\end{equation}
Multiplying both sides of the equation by $W_N^T$ we get
\begin{equation}
A_N(\bmu) c(\bmu) = f_N(\bmu) 
\end{equation}
This implies that $c(\bmu) = a(\bmu)$ since $A_N(\bmu)$ is SPD. Thus, we have $x(\bmu) = \widehat{x}(\bmu)$. Furthermore, since the first iteration of the RBI method immediately yields $x(\bmu) = \widehat{x}(\bmu)$ at Step 6, the method converges in one iteration. 
\end{proof}

Since the method will converge in one iteration for any $x(\bmu)$ belonging to the column space of $W_N$, we expect that increasing $N$ will render the method converge faster. This is because increasing $N$ will effectively make the column space of $W_N$ closer to the solution $x(\bmu)$ and thus accelerate the convergence rate.

\begin{lem}
In the absence of the smoother $\mathrm{S}()$, the RBI method stagnates  at $x(\bmu) = \widehat{x}(\bmu)$.
\end{lem}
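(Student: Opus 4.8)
The plan is to track one full RBI iteration without the smoother, starting from the stagnation point $x(\bmu) = \widehat{x}(\bmu)$, and show the iterate returns to exactly $\widehat{x}(\bmu)$. First I would compute the residual $r(\bmu) = f(\bmu) - A(\bmu)\widehat{x}(\bmu) = f(\bmu) - A(\bmu) W_N a_N(\bmu)$. The crucial observation is that projecting this residual onto the RB space annihilates it: $r_N(\bmu) = W_N^T r(\bmu) = W_N^T f(\bmu) - W_N^T A(\bmu) W_N a_N(\bmu) = f_N(\bmu) - A_N(\bmu) a_N(\bmu)$, which is zero by the defining equation \eqref{eq2} of the RB vector $a_N(\bmu)$.

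Next I would feed $r_N(\bmu) = 0$ into Step 5: since $A_N(\bmu)$ is SPD hence invertible, the correction $e_N(\bmu) = A_N(\bmu)^{-1} r_N(\bmu) = 0$. Step 6 then gives $x(\bmu) \leftarrow \widehat{x}(\bmu) + W_N \cdot 0 = \widehat{x}(\bmu)$, and with the smoother $\mathrm{S}()$ absent Step 7 does nothing, so the iterate is unchanged. To close the argument I would note that the first iteration always produces $x(\bmu) = \widehat{x}(\bmu)$ (as observed in the remark following Algorithm \ref{algorithm1}, valid because the zero initial guess lies in the column space of $W_N$), so from iteration two onward the iterate equals $\widehat{x}(\bmu)$ and, by the computation above, stays there for every subsequent iteration. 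Hence the method stagnates at $\widehat{x}(\bmu)$.

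There is essentially no obstacle here — the statement is an immediate consequence of the Galerkin orthogonality built into the RB projection, namely that the RB residual $r_N(\bmu)$ vanishes precisely when $x(\bmu)$ equals the Galerkin solution $\widehat{x}(\bmu)$. The only point worth stating carefully is the direction of the argument: the residual $r(\bmu)$ itself is generally nonzero (it is merely orthogonal to the column space of $W_N$), so convergence fails, yet its projection is zero, so no further progress is made; it is the smoother that is responsible for reducing the component of the error orthogonal to $W_N$. I would also remark that this lemma explains why a post-smoothing step is indispensable in Algorithm \ref{algorithm1} and sets up the discussion of how $\mathrm{S}()$ and the RB correction complement each other.
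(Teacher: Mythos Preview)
Your proof is correct and follows essentially the same route as the paper: both arguments note that the first iteration lands at $\widehat{x}(\bmu)$ and then show the RB correction in the next iteration vanishes, so the iterate never moves. Your presentation is slightly more direct, isolating the Galerkin orthogonality $r_N(\bmu)=f_N(\bmu)-A_N(\bmu)a_N(\bmu)=0$ before invoking invertibility of $A_N(\bmu)$, whereas the paper writes out the full update $x(\bmu)=\widehat{x}(\bmu)+\widehat{x}(\bmu)-W_N A_N^{-1}(\bmu)W_N^T A(\bmu)\widehat{x}(\bmu)$ and then shows the correction term is zero; the underlying idea is identical.
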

\begin{proof}
In the absence of the smoother, the first iteration of the RBI method yields $x(\bmu) = \widehat{x}(\bmu)$. The second iteration of the method yields
 \begin{equation}
\label{eq8c}
x(\bmu) = \widehat{x}(\bmu) +  \widehat{x}(\bmu) - W_N A^{-1}_N(\bmu) W_N^T  A(\bmu) \widehat{x}(\bmu) .
\end{equation}
Since $\widehat{x}(\bmu) = W_N a(\bmu)$ we have $ \widehat{x}(\bmu) - W_N A^{-1}_N(\bmu) W_N^T  A(\bmu) \widehat{x}(\bmu) = 0$. It means that $x(\bmu) = 
\widehat{x}(\bmu)$ in the second iteration. As a consequence, the RBI method stagnates at $x(\bmu) = \widehat{x}(\bmu)$.
\end{proof}

Therefore, the smoother plays a crucial role in ensuring that the RBI method does not stagnate. The smoother lifts the current iterate out of the \rev{column} space of $W_N$, thereby avoiding stagnation. Furthermore, the smoother must ensure that its output converges toward the solution. For symmetric positive-definite systems, \rev{Gauss-Seidel method} is known as one of the best smoothers for multigrid methods because it can remove high-frequency features on the fine mesh very effectively, so that the Galerkin projection can be resolved on coarser meshes. 

\begin{lem}
Let us introduce the error $e(\bmu)$ that satisfies the following error equation 
\begin{equation}
A(\bmu) e(\bmu) =  r(\bmu) .
\end{equation}
Then $\widehat{e}(\bmu) = W_N e_N(\bmu)$ is the RB approximation to $e(\bmu)$.
\end{lem}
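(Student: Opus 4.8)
The plan is to unwind the definition of the RB approximation from Section~2.1 and apply it verbatim to the error equation $A(\bmu) e(\bmu) = r(\bmu)$, treating $r(\bmu)$ as the right-hand side in place of $f(\bmu)$. By definition, the RB approximation to $e(\bmu)$ is the vector $W_N \tilde{e}_N(\bmu)$, where $\tilde{e}_N(\bmu)\in\mathbb{R}^N$ is obtained by the Galerkin projection of the error equation onto $W_N$, i.e.\ by replacing $f(\bmu)$ with $r(\bmu)$ in \eqref{eq2}--\eqref{eq2b}. Explicitly, $\tilde{e}_N(\bmu)$ solves $\big(W_N^T A(\bmu) W_N\big)\,\tilde{e}_N(\bmu) = W_N^T r(\bmu)$.

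First I would identify the two sides of this projected system with quantities already introduced for the RBI method. By \eqref{eq2b} we have $W_N^T A(\bmu) W_N = A_N(\bmu)$, and by \eqref{eq5} we have $W_N^T r(\bmu) = r_N(\bmu)$. Hence the projected system for $\tilde{e}_N(\bmu)$ reads $A_N(\bmu)\,\tilde{e}_N(\bmu) = r_N(\bmu)$, which is precisely \eqref{eq6}. Since $A_N(\bmu)$ is SPD, this system has a unique solution, so $\tilde{e}_N(\bmu) = e_N(\bmu)$ and therefore $\widehat{e}(\bmu) = W_N e_N(\bmu)$ is exactly the RB approximation to $e(\bmu)$, as claimed.

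I expect essentially no obstacle here: the statement is a direct consequence of the fact that Steps~4--5 of Algorithm~\ref{algorithm1} are nothing other than the RB approximation recipe of Section~2.1 applied to the error equation. The only point deserving a word of care is that this identification uses the \emph{same} reduced-basis matrix $W_N$ that defines $A_N(\bmu)$, so that no reassembly is needed and the offline-precomputed quantities $A_{N,q}$, $f_{N,r}$ can be reused; this is also what licenses the reading of Step~6 as a coarse-grid-style correction, in which the error equation is solved approximately in the RB space and the correction $W_N e_N(\bmu)$ is added back to the current iterate.
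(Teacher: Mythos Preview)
Your proposal is correct and follows essentially the same approach as the paper: introduce the Galerkin-projected error system $W_N^T A(\bmu) W_N\,\tilde e_N(\bmu)=W_N^T r(\bmu)$, identify it with $A_N(\bmu) e_N(\bmu)=r_N(\bmu)$ from Steps~4--5 of Algorithm~\ref{algorithm1}, and use SPD-uniqueness to conclude $\tilde e_N(\bmu)=e_N(\bmu)$. The paper's proof is the same argument with slightly different notation (it names the coefficient vector $g_N(\bmu)$ instead of $\tilde e_N(\bmu)$).
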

\begin{proof}
Let $\tilde{e}(\bmu) = W_N g_N(\bmu)$ be the RB approximation to $e(\bmu)$. Then the vector $g_N(\bmu)$ can be found as the solution of the following system:
\begin{equation}
W_N^T A(\bmu) W_N g_N(\bmu) =  W^T_N r(\bmu) .
\end{equation}
It thus follows that $g_N(\bmu) = e_N(\bmu)$, where $e_{N}(\bmu)$ is given by Step 5 of the RBI algorithm. As a result, we have $\widehat{e}(\bmu) = \tilde{e}(\bmu)$. This concludes the proof.
\end{proof}

In Step 6 of the RBI algorithm, we add the RB approximation of the error to the current iterate. Therefore, the convergence of the RBI method depends on how well $\widehat{e}(\bmu)$ approximates $e(\bmu)$. The performance of the RBI method depends on two factors: ({\em i}) the approximability of the RB method with respect to the solution of the linear system (\ref{eq1}) and ({\em ii}) the convergence \rev{rate} of the smoother.

\subsection{Relation to multigrid method}

The RBI scheme is similar to a {\em two-level} multigrid method. The main difference between our scheme and the two-level multigrid method lies in the restriction and interpolation operators. In our approach, we use the RB matrix $W_N$ as the interpolation operator and its transpose as the restriction operator. For the two-level multigrid method, the restriction operator transfers vectors from the fine grid of dimension $\N$ to the coarse grid of dimension $\N/2^d$, while the interpolation operator transfers vectors from the coarse grid back to the fine grid. (Note here that $d$ is the dimension of the physical domain in which the solution vector $x_\N(\bmu)$ results from the numerical discretization of a physical problem.) As a result, when $\N$ is very large, it is necessary for multigrid method to use a sequence of grids, such that the linear systems on the coarsest grid can be solved very fast. \rev{In contrast, because the RB system can be solved very fast, it is not necessary for the RBI scheme to use more than two levels.} Due to the good approximation property of the subspace adapting to the low-dimensional manifold of the parametrized system (\ref{eq1}), the RBI scheme can converge faster than the multigrid method. 
\rev{Moreover, with $W_N$ being an assembly of solutions to the linear system at various parameter values, our ``restriction'' and ``interpolation'' operators are easily created without resorting to any structure of the underlying mesh or the degrees of freedom. This unique feature renders our method applicable to scenarios when (geometric) multigrid method finds challenging.}

\rev{Furthermore, the smoother plays a different role in the RBI scheme and a multigrid solver. In the context of a multigrid solver, the main role of the smoother is to smooth the error rather than reduce it. For the RBI scheme, the main role of the smoother is to reduce the error rather than smooth it. For symmetric positive-definite systems, Gauss-Seidel method can smooth and reduce the error. For this reason and for comparison with multigrid method, we consider Gauss-Seidel method as the smoother for all the methods described in this paper.}

\section{The reduced-basis conjugate gradient method}

\subsection{Main algorithm}

\rev{In this section, }we employ the RBI scheme as a preconditioner in the conjugate gradient method for iteratively solving the parametrized linear system (\ref{eq1}). The reduced basis conjugate gradient (RBCG) method is listed in Algorithm \ref{algorithm2}. For later reference, we shall denote by
\begin{equation}
\label{eq8rt}
y(\bmu) = \mathrm{RBCG}(A(\bmu), b(\bmu), W_N) ,
\end{equation}
as a solution procedure that applies the RBCG method to the linear system $A(\bmu) x(\bmu) = b(\bmu)$ and returns $y(\bmu)$ as the solution. 

\begin{algorithm}
\caption{Reduced basis conjugate gradient method}
\label{algorithm2}
\vspace{0.5ex}
0. Start with $x_0(\bmu) = 0$ and set $k = 0$ \\[0.5ex]
1. $r_k(\bmu) = f(\bmu) - A \, x_k(\bmu)$   \\[0.5ex]
2. $y_k(\bmu) = \mathrm{RBI}(A(\bmu), r_k(\bmu), W_N, 1)$   \\[0.5ex]
3. $p_k(\bmu) = y_k(\bmu)$   \\[0.5ex]
4. \mbox{\textbf{Repeat}} \\[0.5ex]
5. $\quad\ \alpha_k(\bmu) = \displaystyle \frac{r_k^T(\bmu) y_k(\bmu)}{p_k^T(\bmu) A(\bmu) p_k(\bmu)} $ \\[0.5ex]
6. $\quad\ x_{k+1}(\bmu) = x_k(\bmu) + \alpha_k(\bmu) p_k(\bmu) $ \\[0.5ex]
7. $\quad\ r_{k+1}(\bmu) = r_k(\bmu) - \alpha_k(\bmu) A(\bmu) p_k(\bmu)$ \\[0.5ex]
8. $\quad\ \mbox{\textbf{If }} \|r_{k+1}(\bmu)\| < \epsilon$ \textbf{ then} exit loop \\[.5ex]
9. $\quad\ y_{k+1}(\bmu) = \mathrm{RBI}(A(\bmu), r_{k+1}(\bmu), W_N, 1)$ \\[0.5ex]
10. $\quad \beta_{k}(\bmu) =  \displaystyle \frac{y_{k+1}^T(\bmu) r_{k+1}(\bmu)}{y_k^T(\bmu) r_k(\bmu)} $ \\[0.5ex]
11. $\quad p_{k+1}(\bmu) = y_{k+1}(\bmu) + \beta_k(\bmu) p_{k}(\bmu)$ \\[0.5ex]
12. $\quad k = k+1$ \\[0.5ex]
13. \mbox{\textbf{End Repeat}}
\vspace{0.5ex}
\end{algorithm}

It is important to note that we  perform only one iteration of the RBI preconditioner. As a result, there is no matrix-vector multiplication required for the RBI preconditioner. Therefore, the number of matrix-vector multiplications is exactly equal to the number of iterations. The operation count of a matrix-vector multiplication is equal to the number of non-zeroes of the matrix. If the matrix $A(\bmu)$ is dense then the operation count of one matrix-vector multiplication is $O(\N^2)$. On the other hand, if the matrix $A(\bmu)$ is sparse then the operation count of one matrix-vector multiplication is $O(\N M)$, where $M \ll \N$ represents the sparsity level of the matrix. As a result, for sparse linear systems, the total operation count of the RBCG method per iteration is thus $O(\N (N+M))$, assuming that the cost of the smoothing procedure is smaller than $O(\N (N+M))$. 

We see that the RBCG method has the same computational complexity per iteration as the RBI method. It is expected that the RBCG method will converge faster than the RBI method owing to the optimal Krylov subspace of the CG method for SPD linear systems. 

An important question is how do we determine $N$ for the RBCG method. Since $W_N \subset W_{N+1}$ for any $N$, we expect that increasing $N$ makes the RBCG method converge faster at the expense of higher computational cost per iteration.  To address this issue, we propose to change $N$ adaptively. When the convergence of the RBCG method is slow, namely, when the residual norm decays slowly during the iteration, we keep increasing $N = N + 1$ until the convergence is fast enough.  For instance, if the residual norm ratio $\|r_k(\bmu)\|/\|r_{k+1}(\bmu)\| < \gamma$ (says $\gamma = 10$), then we increase $N$ by 1. In this way, we enforce the residual norm to drop at least by a factor of $\gamma$.

\subsection{Adaptive greedy sampling}
\rev{Here, we propose to incorporate our RBCG algorithm back into the standard RB method. In particular, during the offline phase of RB when the greedy algorithm is invoked to identify the $N+1^{\rm th}$ basis, we have $N$ bases and can readily form $W_N$ to speed up the linear system solve for $x(\bmu_{N+1})$. The resulting adaptive greedy sampling scheme is given by Algorithm \ref{algorithm3}. Note that for $N=1$, a standard linear solver such as multigrid method is used to solve the linear system $A(\bmu_1) x(\bmu_1) = f(\bmu_1)$.}

\begin{algorithm}
\caption{Adaptive greedy sampling}
\label{algorithm3}
\vspace{0.5ex}
0. Choose  $\bmu_1$ randomly in $\Xi_{\rm train}$  \\[0.5ex]
1. Initialize $S_{1} = \{\bmu_1\}$ and $W_0 = \emptyset$   \\[0.5ex]
2. \mbox{\textbf{For}} $N = 1,\ldots, N_{\max}$  \\[0.5ex]
3. $\quad\ \mbox{Solve }  x(\bmu_N) = \mbox{RBCG} (A(\bmu_N), f(\bmu_N), W_{N-1})$ \\[0.5ex]
4. $\quad\ \mbox{Orthogonalize } W_{ N} = \mbox{MGS}(W_{N-1},  x(\bmu_N))$ \\[0.5ex]
5. $\quad\ \mbox{Solve } A_{N}(\bmu) x_{N} (\bmu) = f_{N} (\bmu) $ for all $\bmu \in \Xi_{\rm train}$ \\[0.5ex]
6. $\quad\ \mbox{Find } \bmu_{N+1} = \arg \max_{\bmu \in \Xi_{\rm train}} \sum_{n=1}^N |x_{N,n}(\bmu)|$ \\[.5ex]
7. $\quad\ \mbox{Update } S_{N+1} = S_N \cup \bmu_{N+1}$ \\[0.5ex]
8. \mbox{\textbf{End For}}
\vspace{0.5ex}
\end{algorithm}

\rev{Furthermore, in the context of numerical approximation of an elliptic PDE, we can further reduce the computational cost of the offline stage by pursuing a a multi-fidelity approach for the construction of $W_N$. In particular, we run the adaptive greedy sampling algorithm using a {\em coarse grid} to obtain the parameter sample set $S_{N}$. After computing $S_N$ on the coarse grid, we then generate the matrix $W_N$ by solving the PDE $N$ times on a {\em fine grid}. The facts that we adopt this multi-fidelity approach and that the RB dimension can be rather low for the purpose of this paper make the overhead time negligible. This is confirmed by the computation time taking into account the offline cost, as reported in the next section.}

\section{Numerical experiments}

\label{sec:results}

In this section, we present numerical examples to demonstrate the efficiency of the proposed iterative solvers and preconditioners.  \rev{Since this paper focuses on symmetric positive definite systems, we test the methods on linear systems resulting from the discretization of the Possion equation by Finite Element Method \cite{brenner_mathematical_2007} as implemented in the Matlab package iFEM \cite{Chen2009iFEM}}. Indeed, we solve the following equation on $\Omega = [0,1]^3 \subset {\mathbb R}^3$ by the piecewise linear continuous Galerkin method. 
\begin{subequations}
\begin{equation}
\label{eq:generalpoisson}
\nabla \cdot \left( \kappa({\bf x}; \bmu) \nabla u\right) = 3 \pi^2 \sin(\pi x) \sin(\pi y) \sin(\pi z)  \quad \forall \bm{x} \in \Omega, \bm{\mu} \in \mathcal{D}, 
\end{equation} 
\begin{equation}
\label{eq:bdrycondition}
u(x) = g_D(x; \bmu). 
\end{equation}
\end{subequations}
The coefficient $\kappa({\bf x}; \bmu)$ and the boundary condition $g_D({\bf x}; \bmu)$ are taken as in Table \ref{tab:setup} for our two test cases. \rev{The FE discretization of the above Poisson equation results in the following linear system:
\begin{equation}
A_\N(\bmu) u_\N(\bmu) =  b_\N(\bmu) ,
\end{equation} 
where $u_\N(\bmu)$ denotes the vector of degrees of freedom of the FE solution $u_h(\bmu)$. In both cases, the stiffness matrix is expressed as $A_\N(\bmu) =  A_{1,\N} + \mu_1 A_{2,\N}$. The load vector  is independent of $\bmu$ in the first case and has the parameter-dependent form $b_\N(\bmu) = (1-\mu_2) b_{1,\N} + \mu_2 b_{2,\N}$ in the second case.} The \rev{finite element} solutions for representative parameter values are depicted in Figure \ref{fig:soln}.
\begin{figure}
\begin{center}
\includegraphics[width=0.99\textwidth]{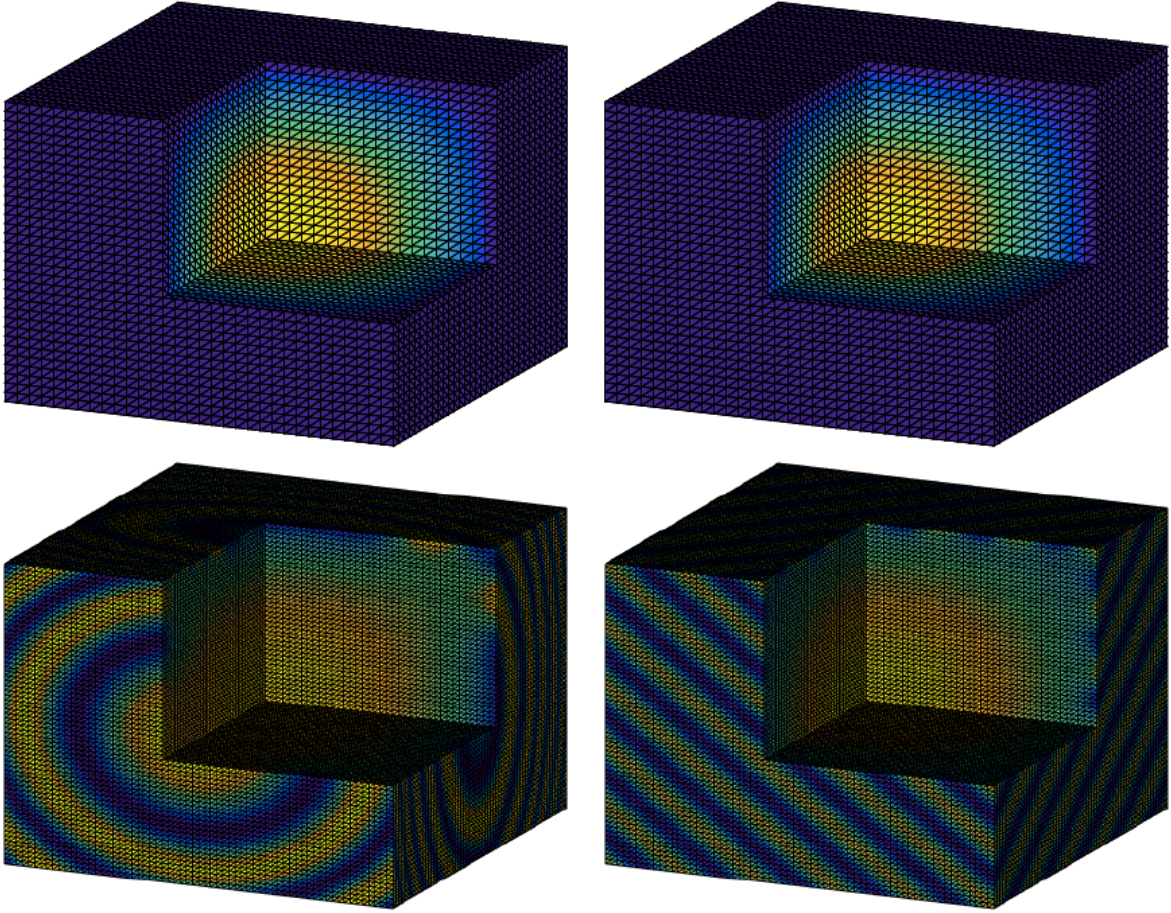}
\end{center}
\caption{\rev{Representative finite element} solutions for the two cases. On the top are for $\bmu = 0$, and $2$. At the bottom are for Case 2 with ${\bf \bmu} = (0, 0)$, and $(2,1)$. }
\label{fig:soln}
\end{figure}

\begin{table}
\begin{center}
    \renewcommand{\tabcolsep}{0.2cm}
    \renewcommand{\arraystretch}{1.6}
\begin{tabular}{|c|c|c|}
\hline
 & Case 1 & Case 2 \\
\hline
$\kappa({\bf x}; \bmu)$ 
& 
\begin{minipage}[t]{0.3\textwidth}
\begin{center}
$\kappa_1({\bf x}; \bmu) = 1 + \mu \left( (x - \frac{1}{2})^2 \right.$\\ 
\hspace*{7mm} $\left.+ (y - \frac{1}{2})^2 + (z - \frac{1}{2})^2 \right)$
\end{center}
\end{minipage}
& 
\begin{minipage}[t]{0.5\textwidth}
\begin{center}
$\kappa_2({\bf x}; \bmu) = 1 + \mu_1 \left(\sin\left(20 \pi (4 (x - \frac{1}{2})^2\right. \right.$ \\
\hspace*{7mm}$\left.\left.+ (y - \frac{1}{2})^2 + (z - \frac{1}{2})^2\right) \right)^2$
\end{center}
\end{minipage}
\\
\hline
$g_D({\bf x}; \bmu)$ & $g_{D1}({\bf x}; \bmu) = 0$ & 
\begin{minipage}[t]{0.4\textwidth}
\begin{center}
$g_{D2}({\bf x}; \bmu) = 
(1 - \mu_2) \cos \left (10 \pi \left (4(x -  \frac{1}{2})^2 \right.\right.$\\
$\left. \left.+ (y -  \frac{1}{2})^2 + (z -  \frac{1}{2})^2\right) \right)$ \\
$+\mu_2 \cos(10 \pi (x+y+z))$
\end{center}
\end{minipage}
\\
\hline
\end{tabular}
\end{center}
\caption{Set up of the two parametric systems.}
\label{tab:setup}
\end{table}

\subsection{Convergence of the new schemes}
\begin{figure}
\begin{center}
\includegraphics[width=0.49\textwidth]{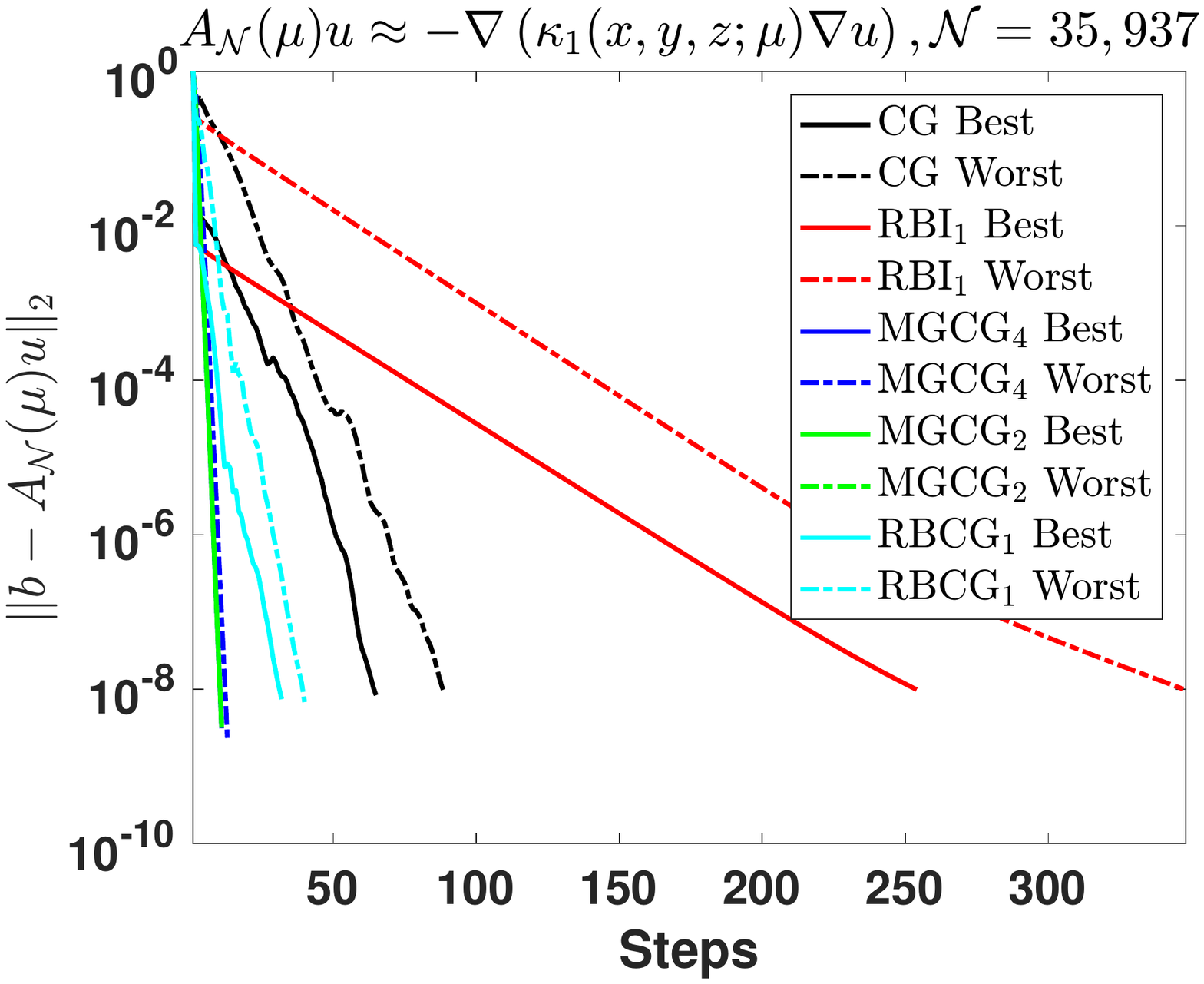}
\includegraphics[width=0.49\textwidth]{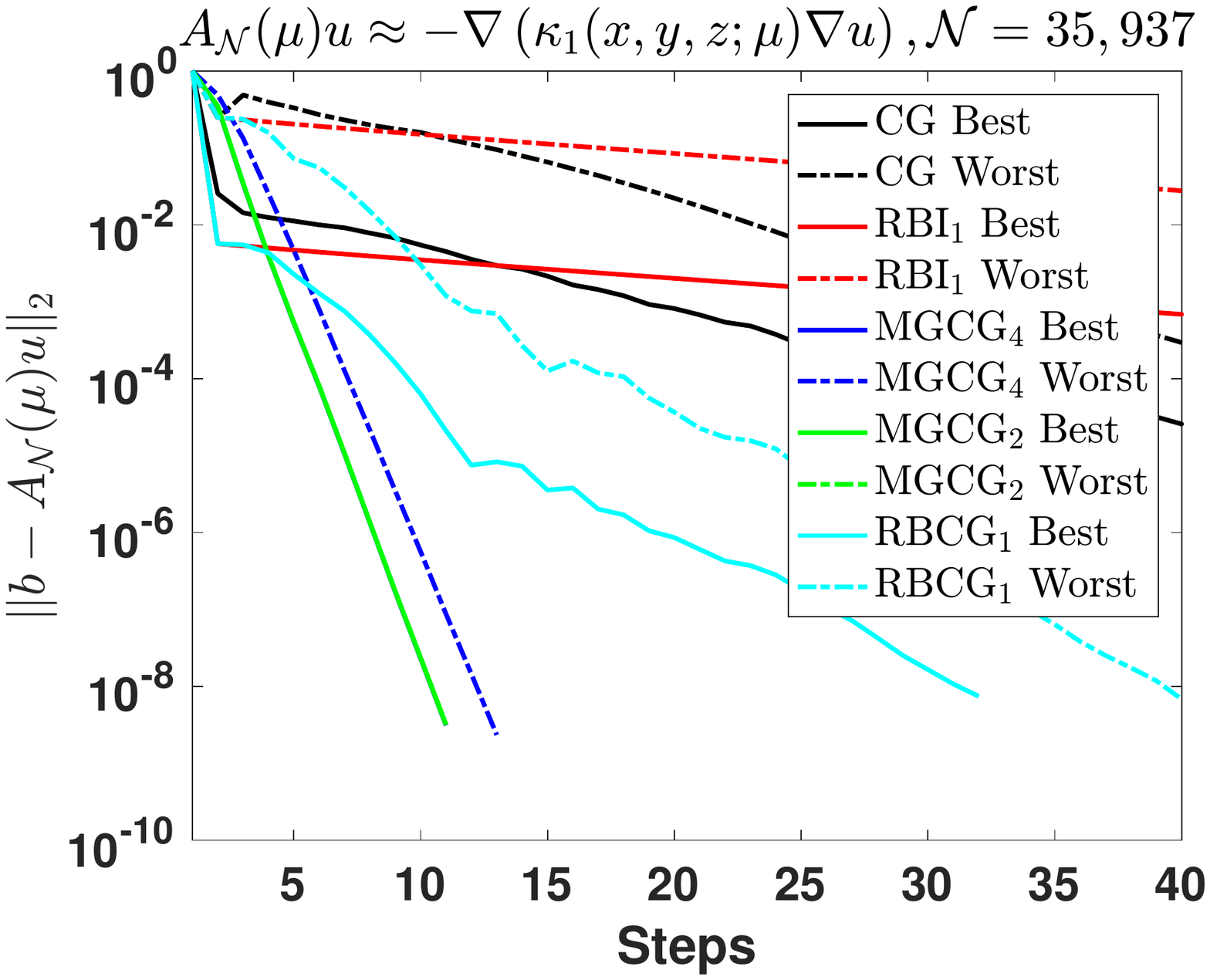}
\includegraphics[width=0.49\textwidth]{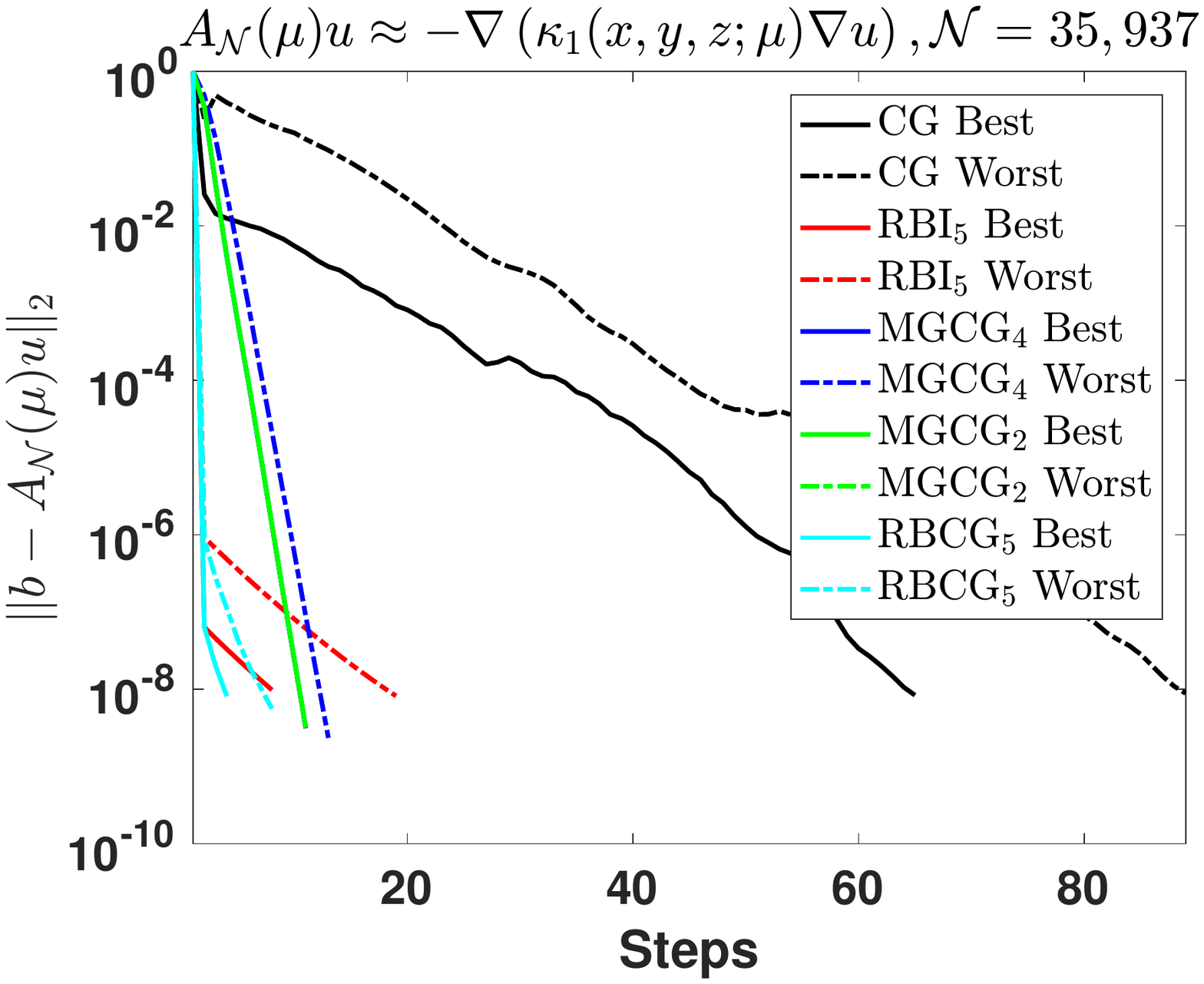}
\includegraphics[width=0.49\textwidth]{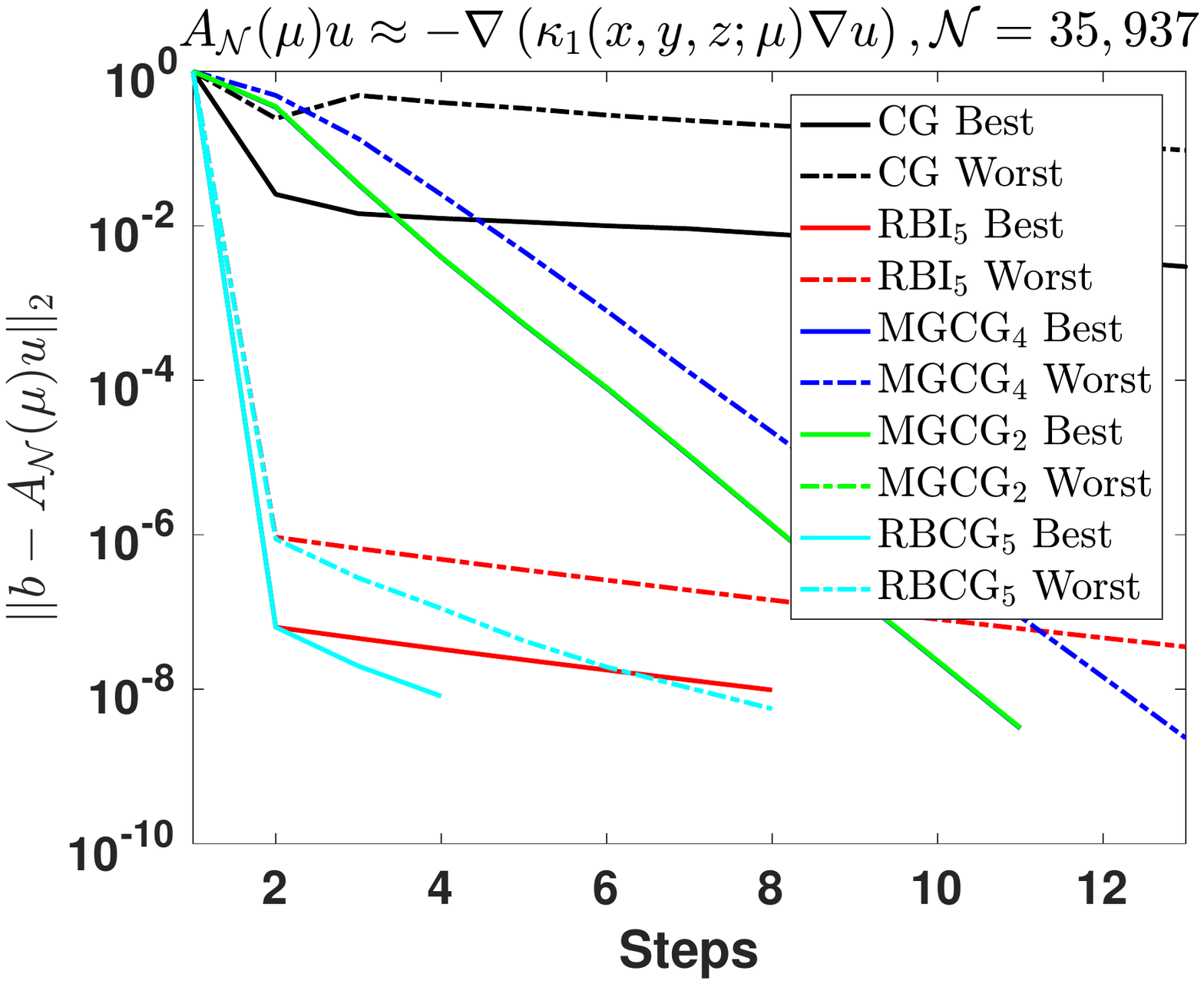}
\end{center}
\caption{The histories of convergence of the relative error for Case 1, as defined by \eqref{eq:relerr}, for each scheme considered with RB dimension being 1 for top and 5 for bottom. The right is the zoomed in version of the left.}
\label{fig:conv_case1}
\end{figure}

\begin{figure}
\begin{center}
\includegraphics[width=0.49\textwidth]{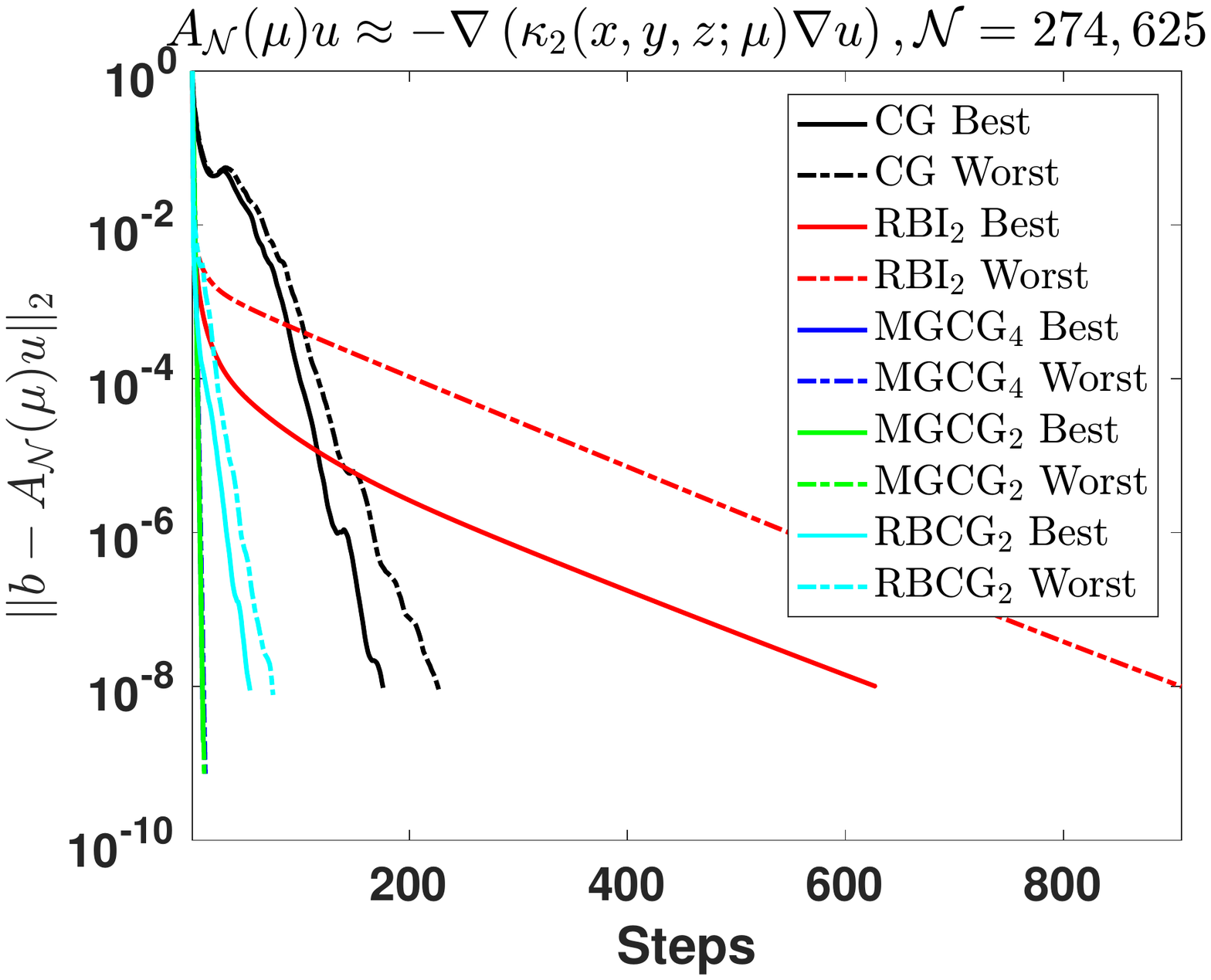}
\includegraphics[width=0.49\textwidth]{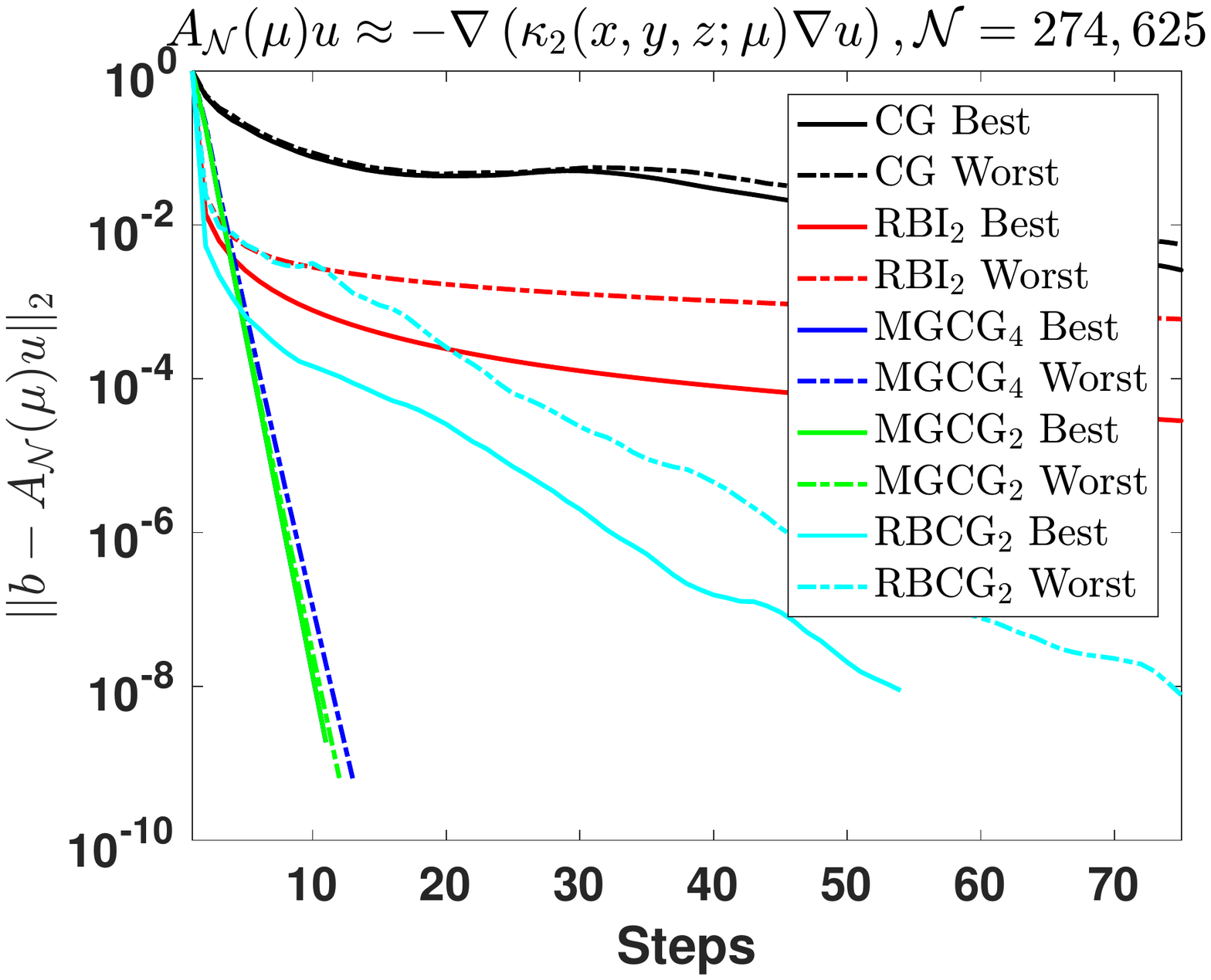}
\includegraphics[width=0.49\textwidth]{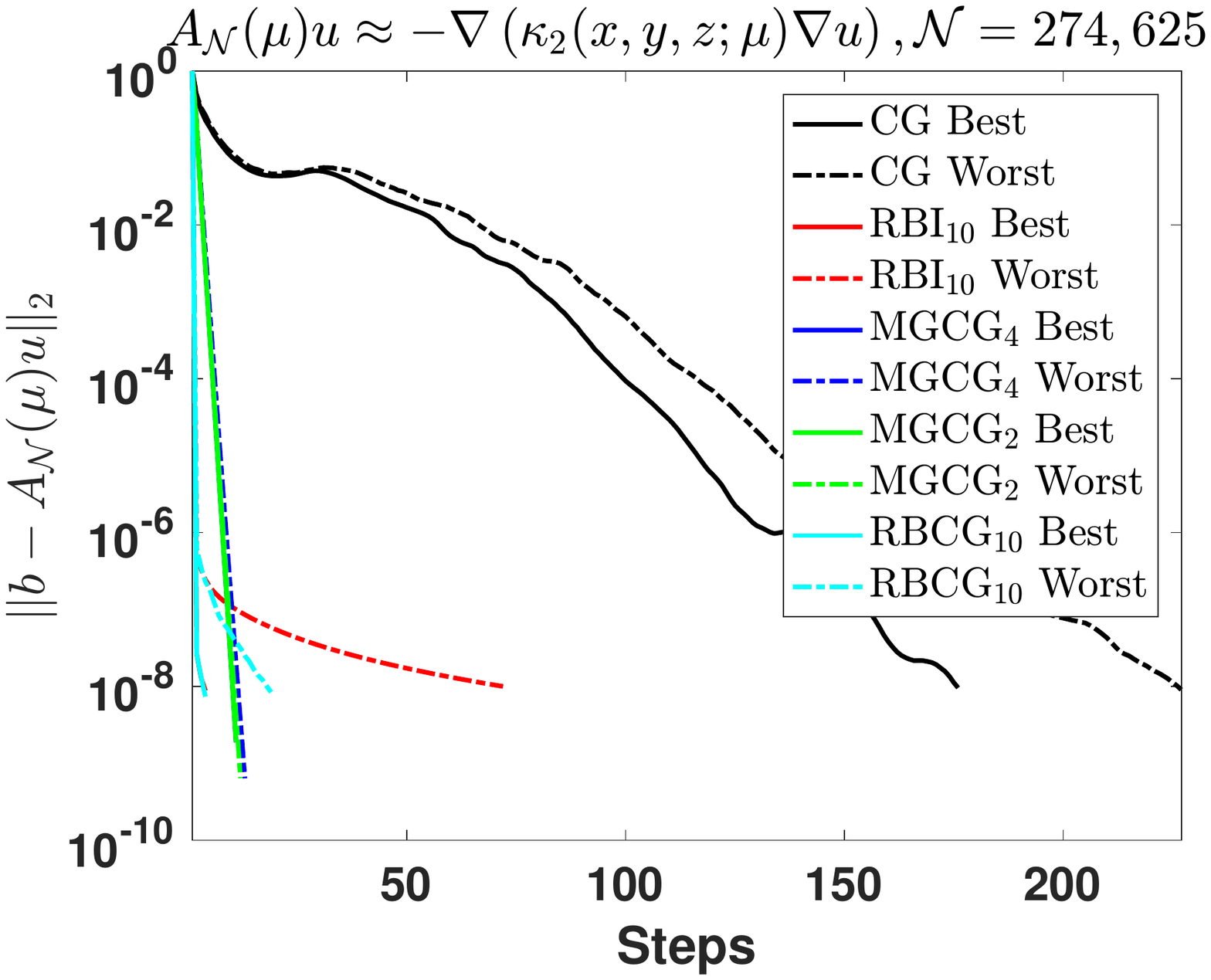}
\includegraphics[width=0.49\textwidth]{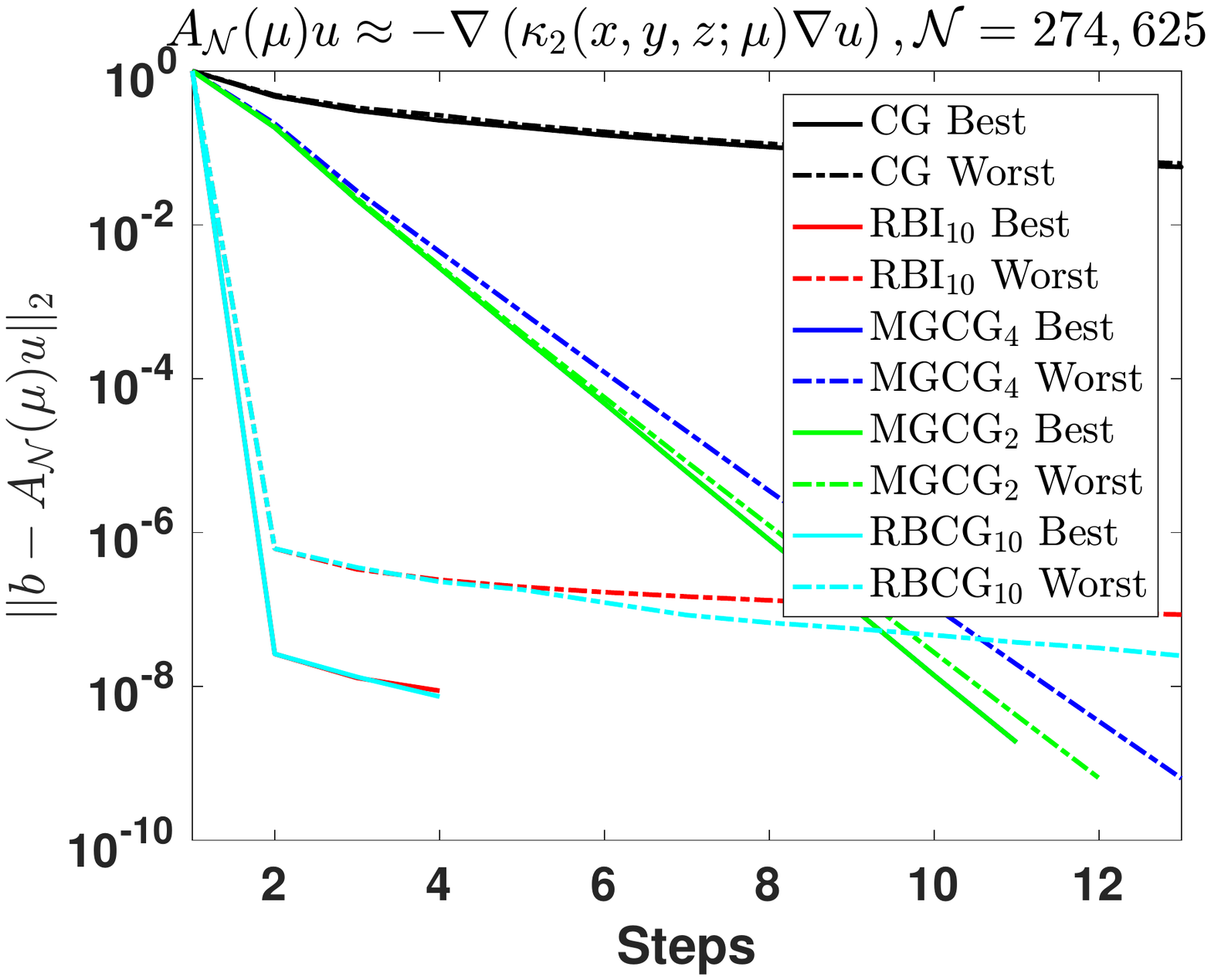}
\end{center}
\caption{The histories of convergence of the relative error for Case 2, as defined by \eqref{eq:relerr}, for each scheme considered with RB dimension being 2 for top and 20 for bottom. The right is the zoomed in version of the left.}
\label{fig:conv_case2}
\end{figure}

\rev{First, we look at the convergence rate of both the RBI scheme and the RBCG method, and compare it with that of the conjugate gradient method with and without multigrid as a preconditioner.} The histories of convergence for each scheme are reported in Figure \ref{fig:conv_case1} and Figure \ref{fig:conv_case2}. Here, we test the methods at $100$ randomly chosen parameter values in their domain $[0, 1]$ and $[0, 2] \times [0, 1]$ respectively for the two cases, and observe the relative error of residual in $L^2$ norm:
\begin{equation}
\epsilon(x; \bmu) = \frac{\lVert b(\bmu) - A(\bmu) u(\bmu) \rVert}{ \lVert b(\bmu) \rVert}.
\label{eq:relerr}
\end{equation}
For each scheme, we plot the best and worst case scenario, i.e. the cases that take the largest and smallest number of steps for convergence among all 100 \rev{parameter values}.

For Case 1, as shown in Figure \ref{fig:conv_case1}, we report two scenarios, $N = 1$ and $N = 5$. We clearly see that, even if we only use a one-dimensional RB as a preconditioner (top row of Figure \ref{fig:conv_case1}, the method converges in half number of steps. When we raise the dimension of the RB preconditioner to 5, this ratio decreases to $\frac{1}{10}$. This demonstrates the high level of effectivity of the new scheme. For Case 2, we test the schemes for $N = 2$ and $10$. It also demonstrate that the RBI-based schemes vastly outperforms the CG method, while outperforming or matching the multigrid-preconditioned CG methods when it comes to the number of steps toward convergence.

\subsection{Computation time}

\rev{Next, we examine the computation time for each scheme}. We show in Figure \ref{fig:time} the cumulative computation time for each scheme as we increase the RB dimension. Of course, the CG and multigrid-preconditioned CG stay the same. For that reason, we use the CG timing as reference. We see that, as the RB dimension increases, the RB schemes becomes more economical going from being more expensive than CG to much less expensive. Comparing with the multigrid-preconditioned CG, we clearly see that, even though the number of steps are only slightly better or comparable as shown in Figures \ref{fig:conv_case1} and \ref{fig:conv_case2}), the RB-based schemes are orders of magnitude more efficient. This of course is because each iteration of the RB schemes is much fast than one iteration of the multigrid-preconditioned CG.

\begin{figure}
\begin{center}
\includegraphics[width=0.49\textwidth]{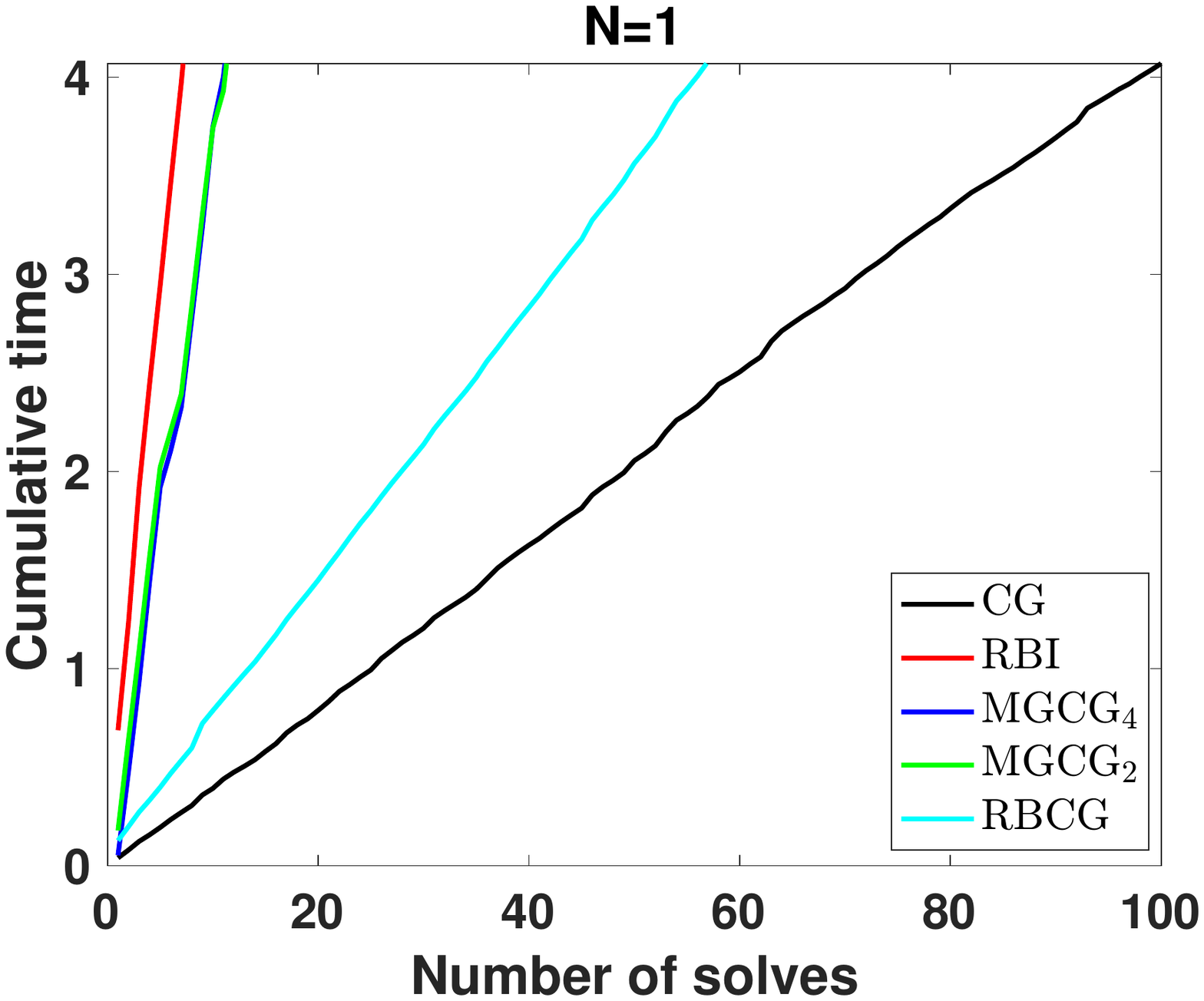}
\includegraphics[width=0.49\textwidth]{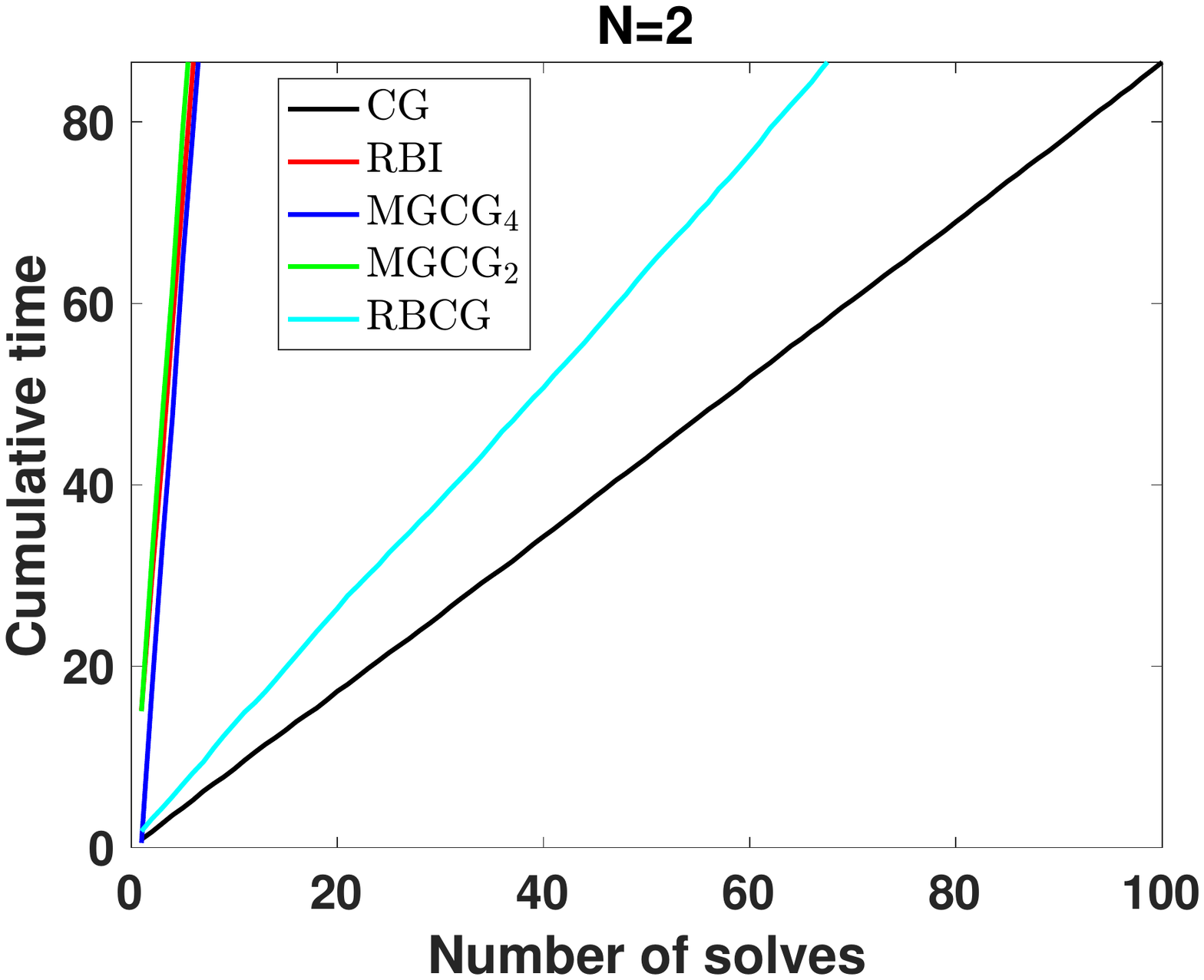}\\
\includegraphics[width=0.49\textwidth]{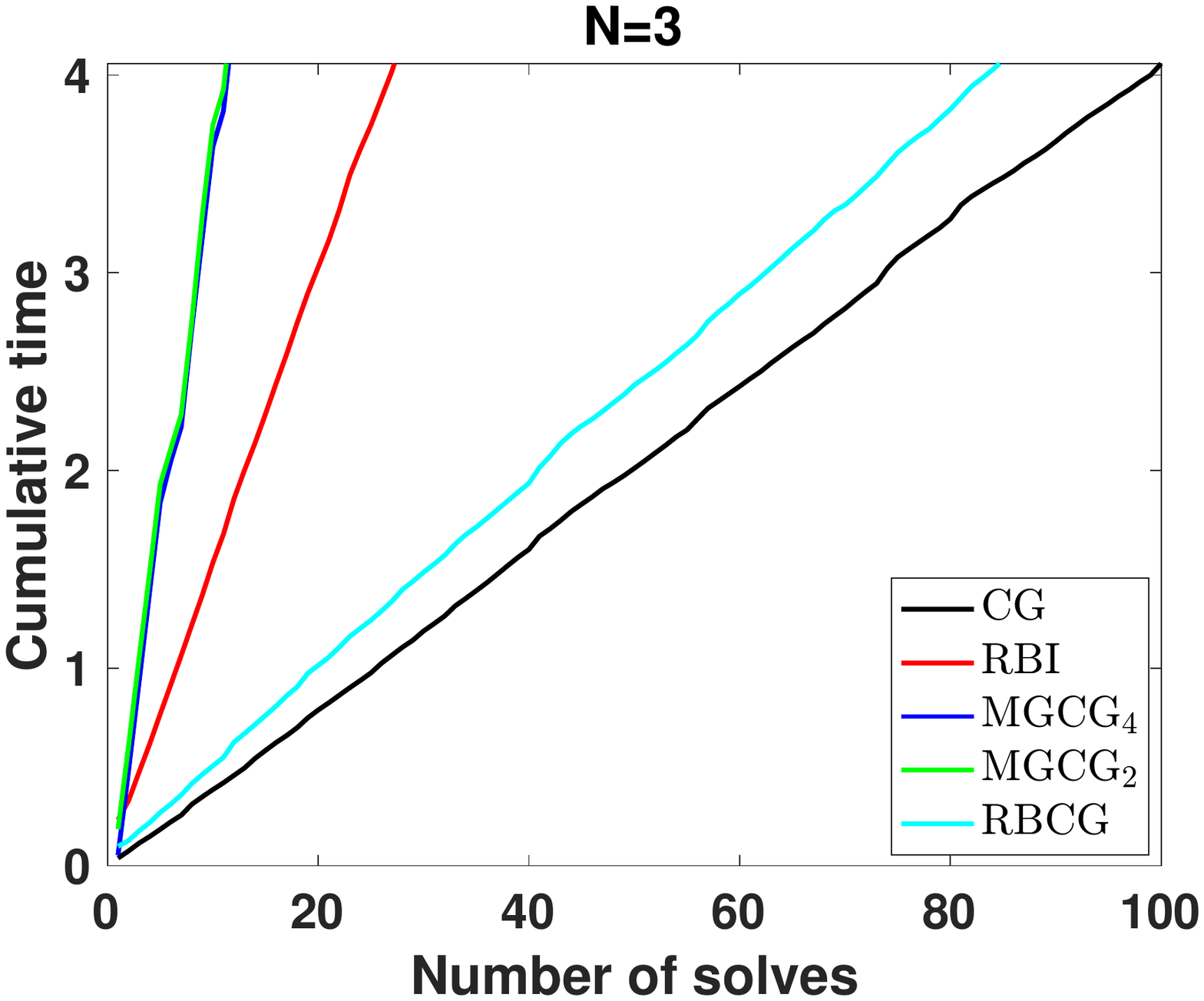}
\includegraphics[width=0.49\textwidth]{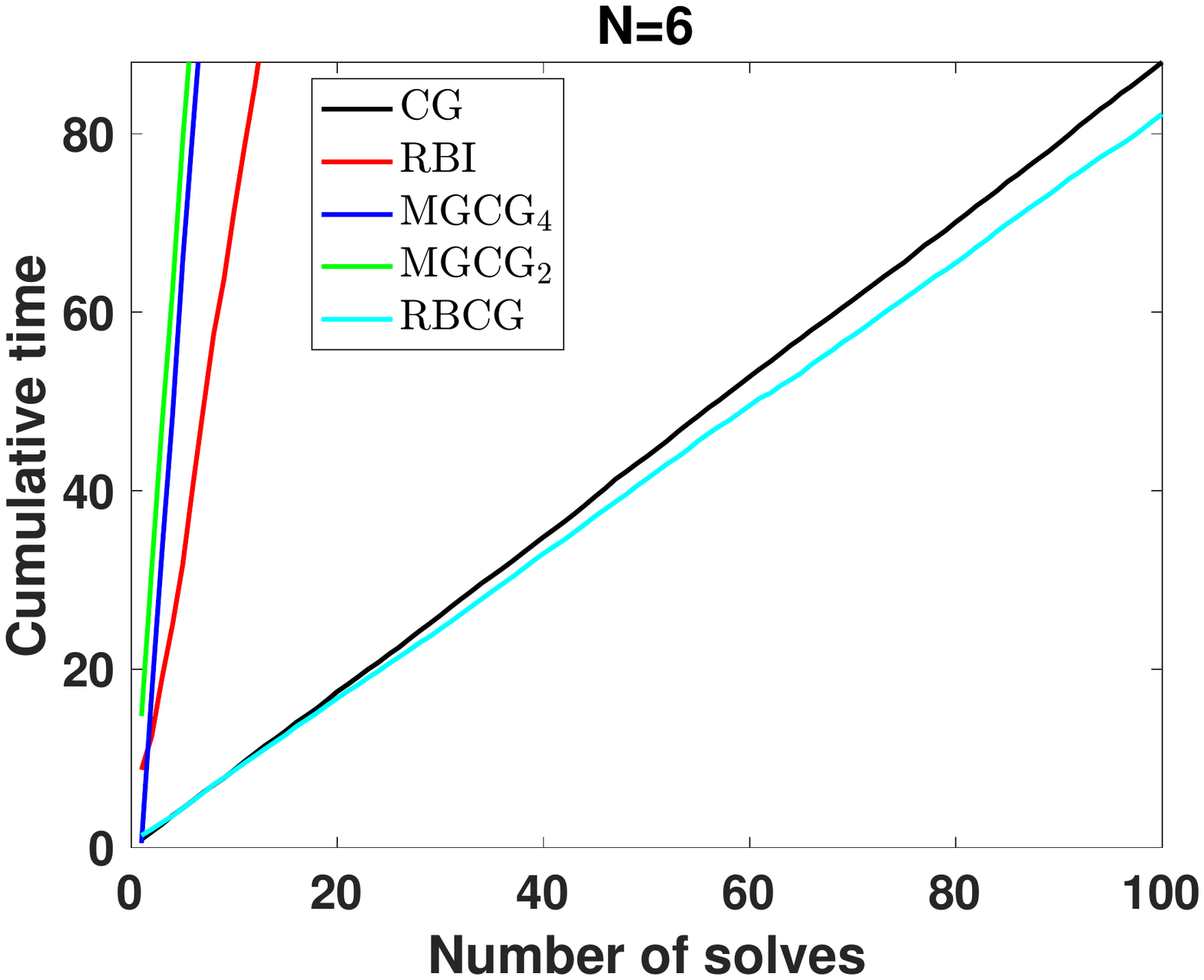}\\
\includegraphics[width=0.49\textwidth]{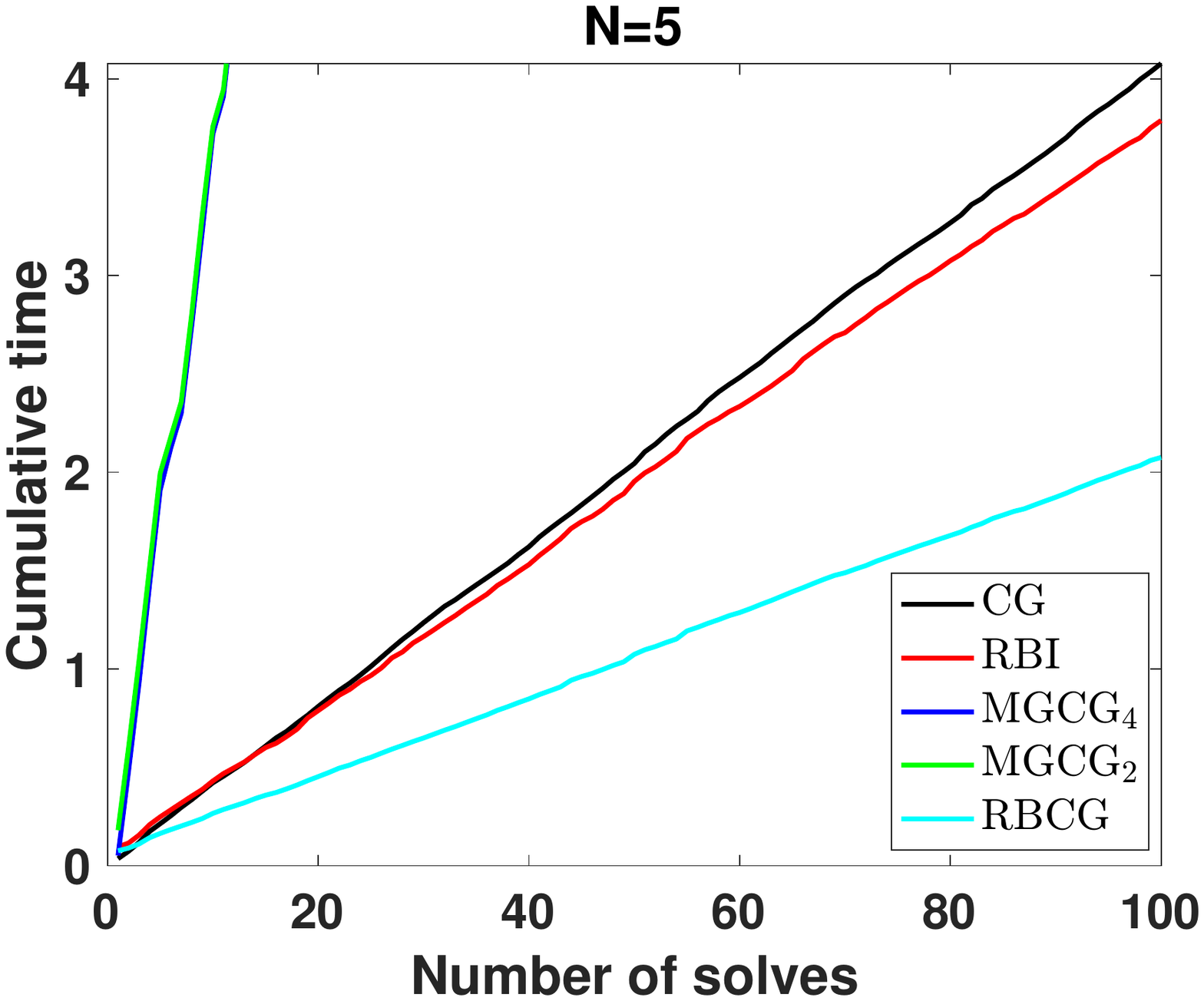}
\includegraphics[width=0.49\textwidth]{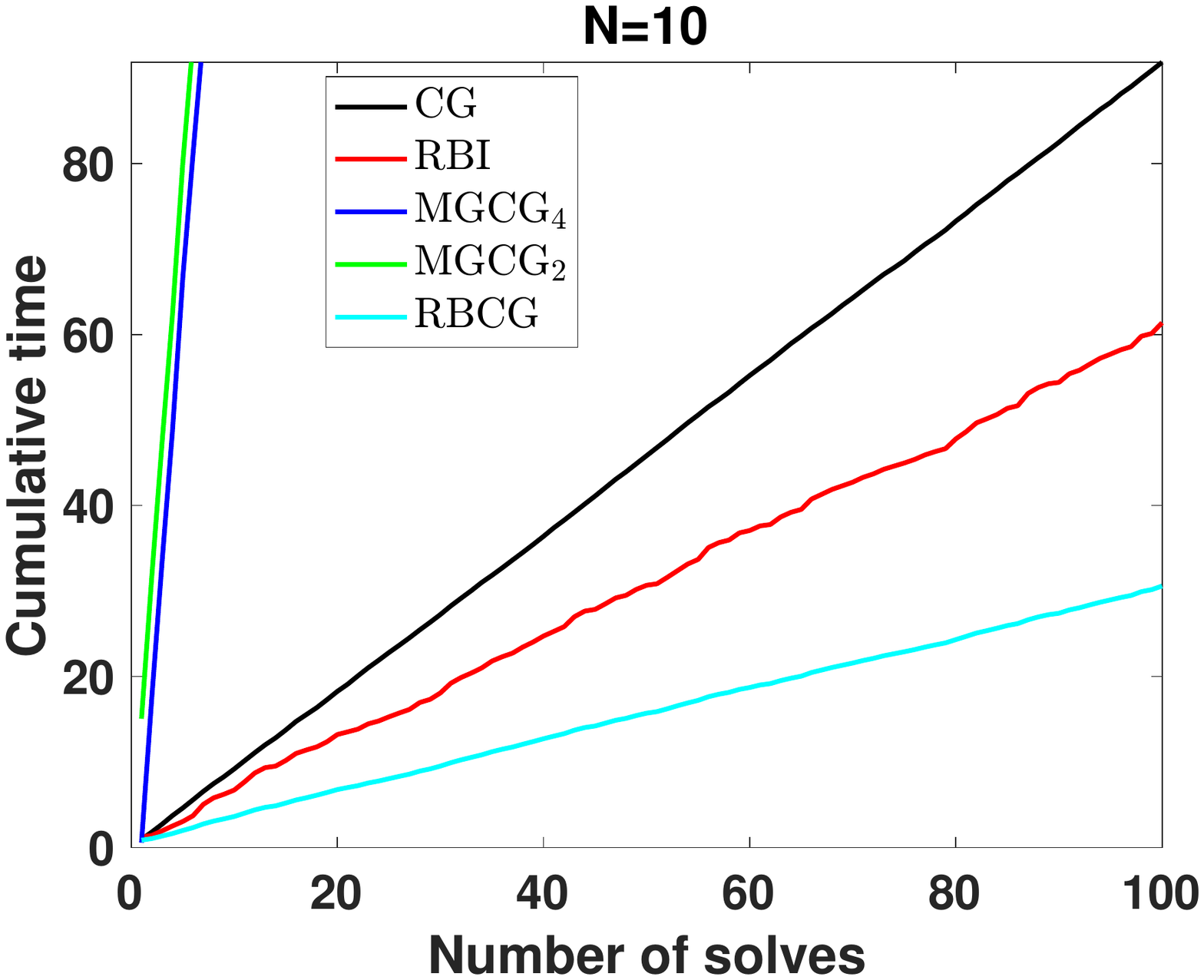}
\end{center}
\caption{The time as a function of the number of linear system solves for each scheme. Pictured on the left are for Case 1, and right are for Case 2.}
\label{fig:time}
\end{figure}

\subsection{Impact of the RB dimension on convergence}

Here, we would like to examine the impact of the RB dimension on the convergence of the RBI and RBCG schemes. Toward that end, we plot the average of the numbers of steps (defined to be the average of the ``best'' and ``worst'' cases shown in Figures \ref{fig:conv_case1} and \ref{fig:conv_case2}) as a function of the RB dimension. This is displayed in Figure \ref{fig:RBdimImpact}. We observe that the increase of the RB dimension clearly has a positive impact on the convergence of the schemes.

\begin{figure}
\begin{center}
\includegraphics[width=0.49\textwidth]{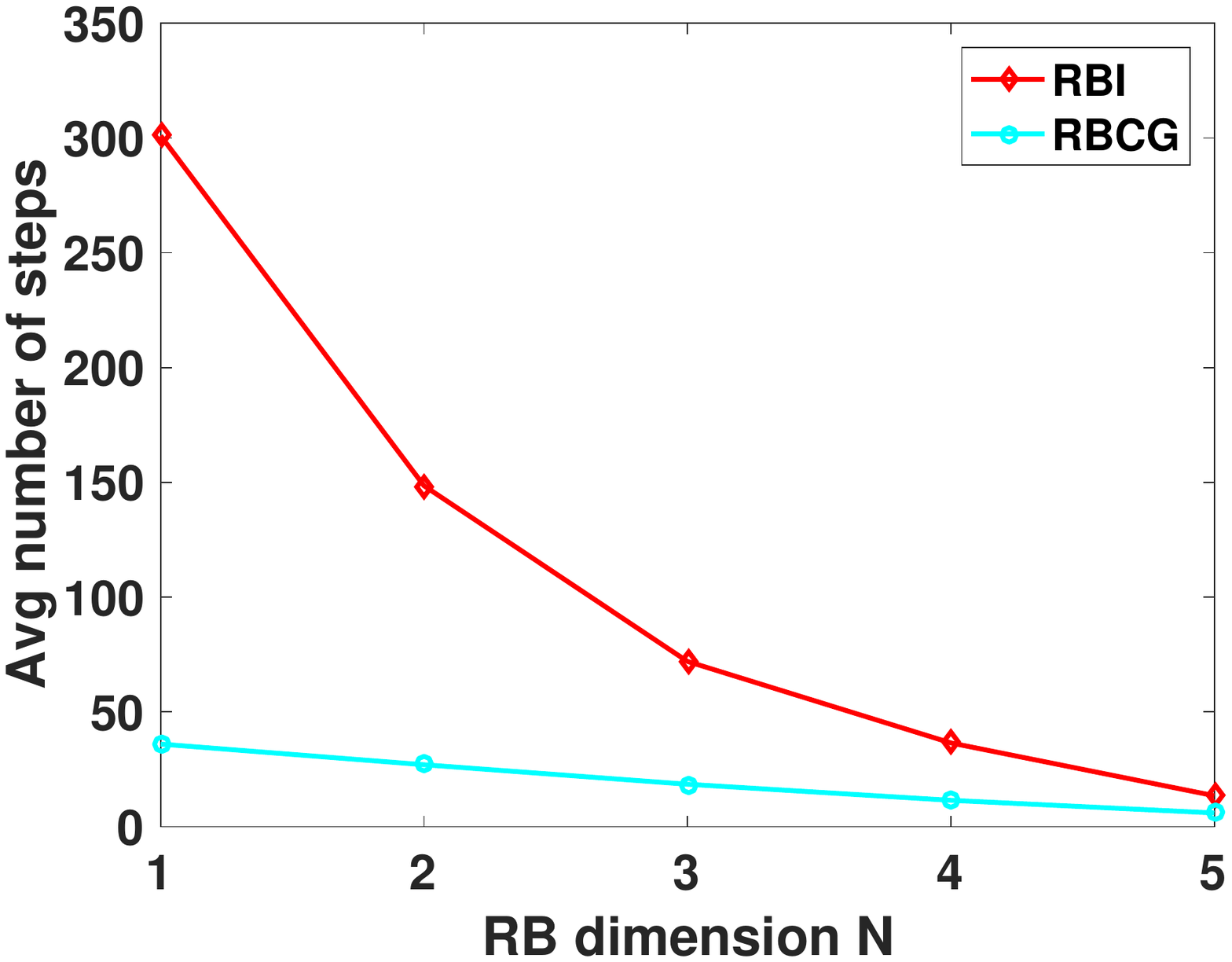}
\includegraphics[width=0.49\textwidth]{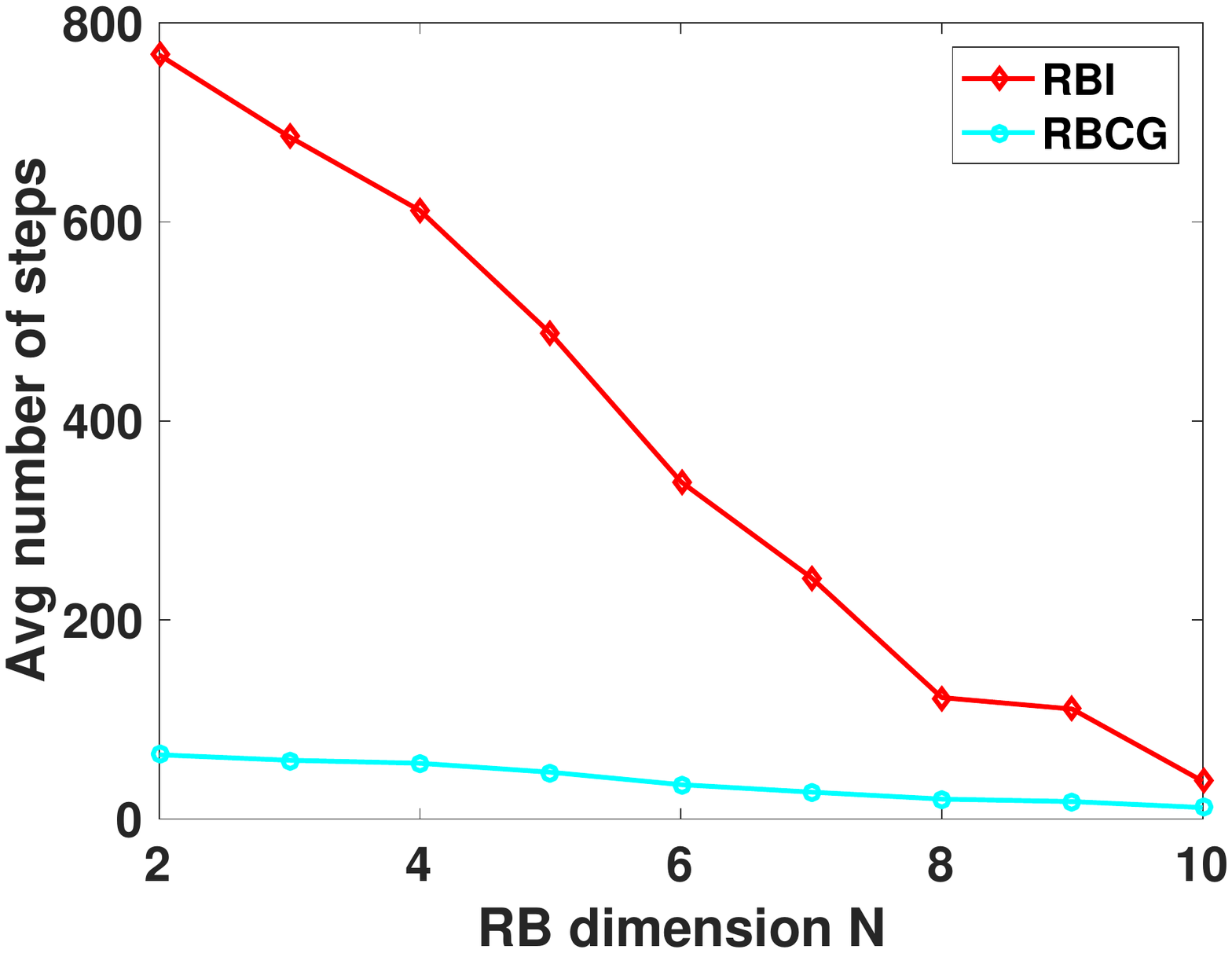}
\end{center}
\caption{The number of steps toward convergence for the two RB-based schemes for Case 1 (left) and Case 2 (right).}
\label{fig:RBdimImpact}
\end{figure}

\subsection{Performance for large system}

\rev{Finally, we test the methods on a large system. Toward that end, we refine the mesh of case $1$ until there are more than $2$ million degrees of freedom. The results are given in Figure \ref{fig:largesystem}. We see that the performance of RBCG is still comparable with the multigrid preconditioned CG in terms of convergence. However, for a large system like this, more levels are necessary for the V-cycle of multigrid. The situation is exacerbated by the storage and application of the multiple large restriction and prolongation operators, especially at the higher levels. As a result, it becomes challenging to apply MG-CG scheme on such systems, as can be seen from the computational time in Figure \ref{fig:largesystem} right. These constraints are substantially mitigated by our RBCG scheme.}

\begin{figure}
\begin{center}
\includegraphics[width=0.49\textwidth]{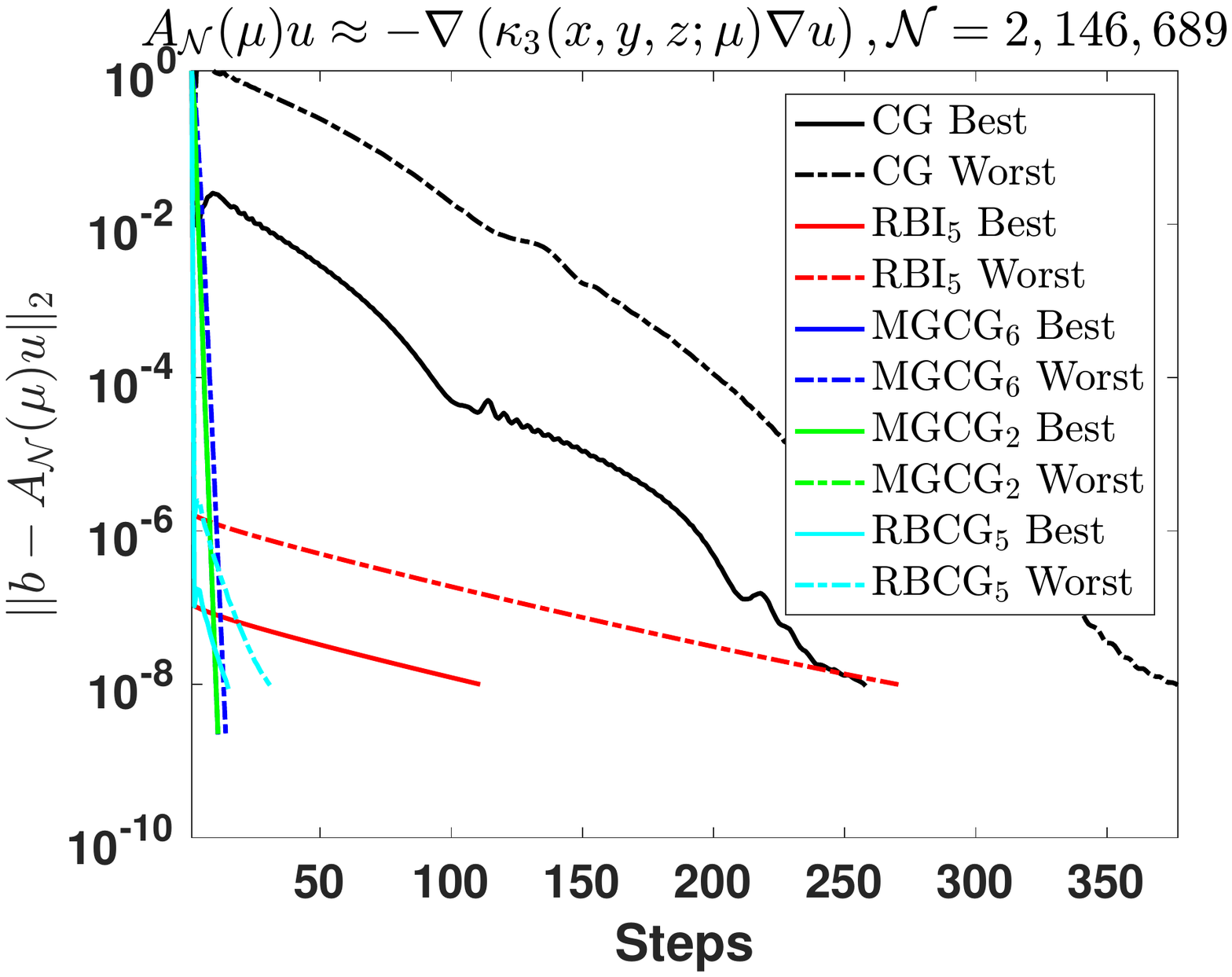}
\includegraphics[width=0.49\textwidth]{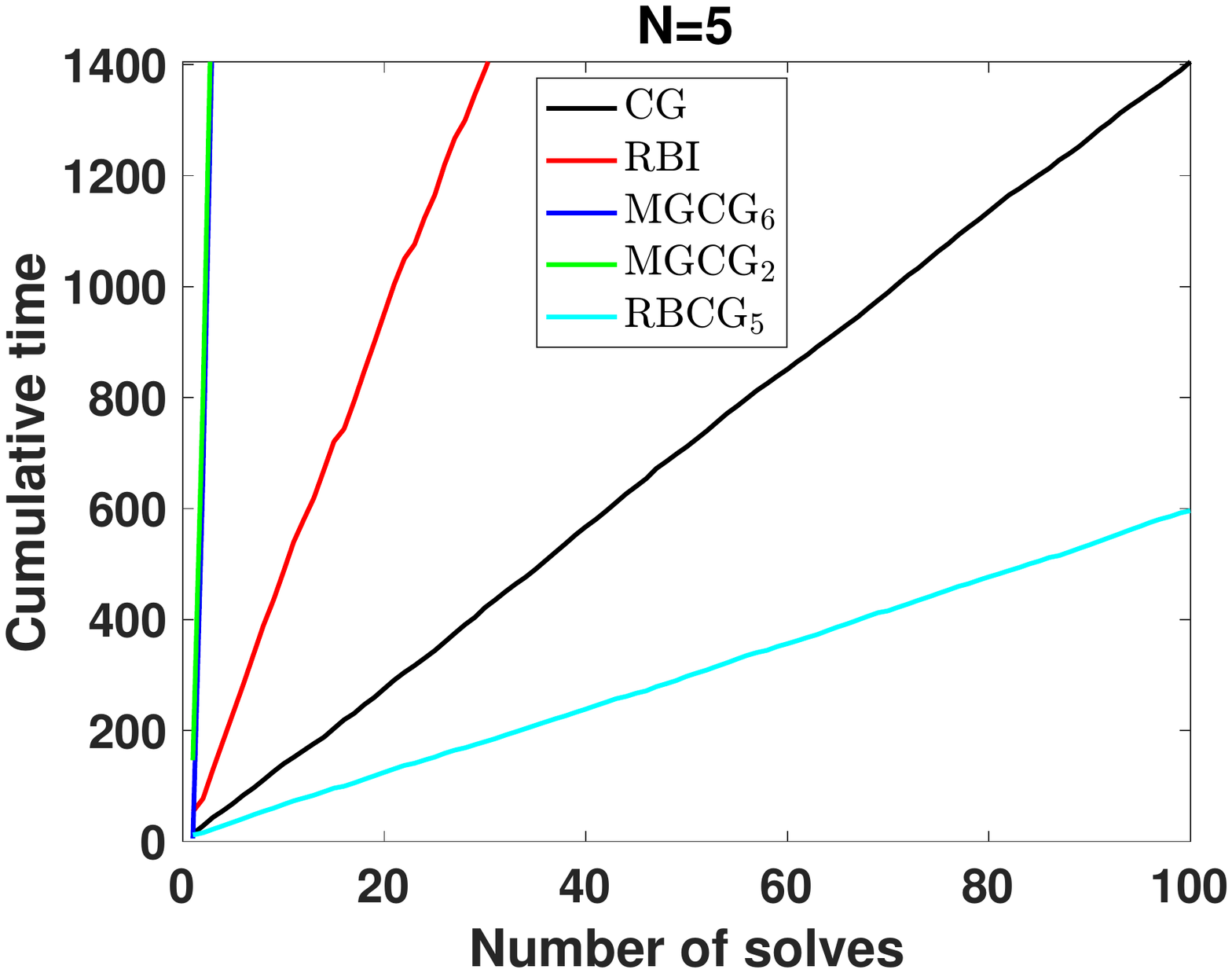}
\end{center}
\caption{\rev{Comparison of the histories of convergence and computation time when applied to a system of size more than $2$ million.}}
\label{fig:largesystem}
\end{figure}

\section{Conclusion}

\rev{Inspired by the traditional RB and multigrid methods, we develop and demonstrate a class of reduced basis methods, the first of its kind, for iteratively solving parametrized SPD linear systems.  It employs a greedy algorithm to efficiently determine sampling parameters and associated basis vectors and build a system-specific subspace.  Based on this subspace, solution procedures are designed to employ the reduced basis approximation as a {\em stand-alone iterative solver} or as a {\em preconditioner} in the conjugate gradient method. Numerical experiments demonstrate the superiority of the methods both in terms of number of steps toward convergence and the computational cost of each step.}

\rev{Extension of this novel approach to the non-symmetric and indefinite cases, and to systems resulting from time-dependent problems are a subject of ongoing research. For the RB schemes, the main role of the smoother is to reduce the error rather than smooth it. Hence, the construction of an appropriate smoother for the RB schemes is also worthy of investigation.}



\bibliographystyle{abbrv}

\end{document}